\documentclass[article,onefignum,onetabnum, final]{siamart171218}
\pdfoutput=1

\usepackage{lipsum}
\usepackage{amsfonts}
\usepackage{graphicx}
\usepackage{epstopdf}
\usepackage{algorithmic}
\usepackage{verbatim}
\usepackage{tikz}
\usepackage{pdfcomment}
\usepackage{pgfplots}
\usepackage{subfig}
\usepackage{mathtools}
\usepackage{bigints}
\usepackage{tabularx,ragged2e,booktabs,caption}
\ifpdf
  \DeclareGraphicsExtensions{.eps,.pdf,.png,.jpg}
\else
  \DeclareGraphicsExtensions{.eps}
\fi
\renewenvironment{equation*}{\[}{\]\ignorespacesafterend}

\newcommand{\eps}{\varepsilon}
\newsiamremark{remark}{Remark}
\newsiamremark{hypothesis}{Hypothesis}
\crefname{hypothesis}{Hypothesis}{Hypotheses}
\newsiamthm{claim}{Claim}
\newsiamremark{assumption}{Assumption}

\headers{A priori error estimates for Westervelt's wave equation}{V. Nikoli\' c and B. Wohlmuth}

\title{A priori error estimates for the finite element approximation of Westervelt's quasilinear acoustic wave equation \\[2mm]}
\author{Vanja Nikoli\' c\footnotemark[2]
\and Barbara Wohlmuth\footnotemark[2]}

\usepackage{amsopn}


\ifpdf
\hypersetup{
  pdftitle={A priori error estimates for the Westervelt equation},
  pdfauthor={V. Nikoli\'c and B. Wohlmuth}
}
\fi



\begin{document}
\maketitle
\vspace{4mm}
\begin{abstract}
We study the spatial discretization of Westervelt's quasilinear strongly damped wave equation by piecewise linear finite elements. Our approach employs the Banach fixed-point theorem combined with a priori analysis of a linear wave model with variable coefficients.  Degeneracy of the semi-discrete Westervelt equation is avoided by relying on the inverse estimates for finite element functions and the stability and approximation properties of the interpolation operator. In this way, we obtain optimal convergence rates in $L^2$-based spatial norms for sufficiently small data and mesh size and an appropriate choice of initial approximations. Numerical experiments in a setting of a 1D channel as well as for a focused-ultrasound problem illustrate our theoretical findings.
\end{abstract}
~\\
\begin{keywords}
finite element method, a priori analysis, nonlinear acoustics, Westervelt's equation
\end{keywords}

\begin{AMS}
  35L05, 65M15, 65M60
\end{AMS}
\let\thefootnote\relax\footnote{$^\dagger$Technical University of Munich, Department of Mathematics, Chair of Numerical Mathematics, Boltzmannstra\ss e 3, 85748 Garching, Germany 
  (\email{vanja.nikolic@ma.tum.de}, \email{wohlmuth@ma.tum.de}).}
\section{Introduction}
The goal of the present work is to analyze a spatial discretization by piecewise linear finite elements in nonlinear acoustics. To this end, we study a discretization of Westervelt's wave equation for the acoustic pressure $u$
\begin{align} \label{Westervelt}
(1-2ku)u_{tt}-c^2 \Delta u-b \Delta u_t=2k u_t^2,
\end{align}
which represents a classical model for nonlinear ultrasound propagation through thermoviscous fluids~\cite{westervelt1963parametric}. Our research is motivated by a rising number of nonlinear ultrasound applications in medicine and industry~\cite{bjorno2002forty, fierro2015nonlinear, muller2008nonlinear, novell2009exploitation, pinton2011effects}. In~\eqref{Westervelt}, the constant $c$ denotes the speed of sound, $b$ is the sound diffusivity, and $k=\beta_a/(\varrho c^2)$, where $\varrho$ is the mass density and $\beta_a$ the coefficient of nonlinearity of the medium.  \\
\indent Westervelt's equation is a strongly damped quasilinear wave equation with potential degeneracy due to the factor $1-2ku$ next to the second time derivative. For its derivation and the theoretical foundations of nonlinear acoustics, we refer to~\cite{crighton1979model, enflo2006theory,hamilton1998nonlinear, westervelt1963parametric}, while results on the existence of smooth solutions of \eqref{Westervelt} can be found in~\cite{KaltenbacherLasiecka_Westervelt, KaltenbacherLasieckaVeljovic, meyer2011optimal}. Efficient simulation of the Westervelt equation and, in general, nonlinear sound propagation by the finite element method has been an active area of research. We refer to, e.g.,~\cite{fritz2018well, hoffelner2001finite, kagawa1992finite, manfred, muhr2018self, muhr2017isogeometric, tsuchiya2003finite, walsh2007finite}, which all focus on algorithmic aspects of finite element discretizations without any a priori analysis. \\
\indent Error analysis for the standard finite element discretization of linear wave equations is an extensively studied topic; see, e.g.,~\cite{baker1976error, bales1994continuous, bangerth1999finite, dupont1973, georgoulis2013posteriori, larsson1991finite, thomee2004maximum} and the references given therein. In particular, we single out the work on a priori analysis in~\cite{baker1976error} which provides $L^\infty(0,T; L^2)$ error estimates for the undamped linear wave equation and the results on error bounds for strongly damped linear wave equations~\cite{larsson1991finite, thomee2004maximum}. Results on a class of nonlinear wave equations of a divergent type are also well-established. In~\cite{dendy1977galerkin}, error analysis is provided for a semi-discretization of nonlinear wave equations of the form \[ u_{tt} - \sum _{i = 1}^n \frac{\partial }{\partial x_i } A_i (x, \nabla u) = f(x,t, u, \nabla u),\]
with a monotonicity condition on the corresponding bilinear form; cf.~\cite[Theorem 3.2]{dendy1977galerkin}.
In~\cite{suli2000priori}, semi-discretization for the following damped model is considered
\begin{equation*}
    \begin{aligned}
        u_{tt}-\Delta b(u)+\frac{\partial}{\partial t}(a(u))=f(x,t),
    \end{aligned}
\end{equation*}
with $b'(u) \geq M_0 >0$.  In~\cite{emmrich2015full}, convergence of a full discretization for a class of nonlinear second-order in time evolution equations is provided where the operator acting on the first time derivative is assumed to be hemicontinuous, monotone, coercive, and to fulfill a certain growth condition. Moreover, the operator acting on the solution is assumed to be linear, bounded, symmetric, and strongly positive. We also mention the results in~\cite{makridakis1993finite} on a class of problems of nonlinear elastodynamic and in~\cite{ortner2007discontinuous} on the discontinuous Galerkin methods for a class of divergent-type nonlinear hyperbolic equations. \\
\indent This work contributes to the finite element analysis of Westervelt's equation in two ways. We first prove that, coupled with non-zero initial conditions and homogeneous Dirichlet data, its semi-discretization by piecewise linear finite elements has a unique solution which remains bounded in an appropriately chosen norm. Secondly, we derive an optimal a priori error estimate that has the form
\begin{equation*} 
\begin{multlined}[c] \|u-u_h\|_{L^\infty(0,T; L^2(\Omega))}+\left\|u_t-u_{h,t} \right\|_{L^
\infty(0,T; L^2(\Omega))}+\left\|u_{tt}-u_{h,tt} \right\|_{L^
2(0,T; L^2(\Omega))}\\[2mm]
+h\|\nabla (u-u_h)\|_{L^\infty(0,T; L^2(\Omega))}+ h\left \|\nabla u_{t}-u_{h,t} \right\|_{L^\infty(0,T; L^2(\Omega))}  \leq \, C h^{s}. \end{multlined}
\end{equation*}
where $\max \{1, d/2\} < s \leq 2$. Our results are intended to enhance the numerical analysis of strongly damped quasilinear wave equations where the nonlinearities in the equation involve the time derivatives of the solution. We note that a particular feature of the present quasilinear equation is that the non-degeneracy is \emph{not} a priori given. In our proofs we have to ensure that the factor $1-2ku_h$ next to the second time derivative remains positive.\\
\indent In the continuous analysis of the Westervelt equation, non-degeneracy is typically achieved by a higher-regularity result for the solution and the use of an embedding, e.g., $H^2(\Omega) \hookrightarrow L^\infty(\Omega)$; see~\cite[Theorem 3.1]{KaltenbacherLasiecka_Westervelt}. Such a strategy is not possible here since we use piecewise linear basis functions. Instead we employ inverse estimates for finite element functions and the stability and approximation properties of the Scott--Zhang interpolation operator~\cite{scott1990finite}. \\
\indent Our analysis relies on the Banach fixed-point theorem combined with error estimates for a linear wave equation with variable coefficients. Therefore, in this work, we also obtain error estimates for strongly damped variable coefficient wave equations that take coefficient error into account as relevant, e.g., in optimal control problems in nonlinear acoustics~\cite{clason2009boundary, kaltenbacher2016shape, muhr2017isogeometric}. \\
\indent The rest of the paper is organized as follows. Section~\ref{Section:TheoreticalPreliminaries} introduces the notation and lays out the most important theoretical results in Sobolev and finite element spaces that we often use in the analysis. In Section~\ref{Section:Continuous problem}, we discuss the continuous problem and its well-posedness. In Section~\ref{Section:LinVariableCoeff}, we then study a linearized Westervelt equation with variable coefficients and prove that its semi-discretization has a unique solution. Section~\ref{Section:LinVariableCoeff_APriori} focuses on the a priori analysis of this linear model. In Section~\ref{Section:FEMWestervelt}, we show well-posedness and derive convergence rates for the semi-discrete Westervelt equation. Finally, Section~\ref{Section:NumExample} contains numerical examples that illustrate our theory.
\section{Theoretical preliminaries} ~\label{Section:TheoreticalPreliminaries} We begin by setting the notation and summarizing some auxiliary properties of Sobolev and finite element spaces that we will frequently use in the analysis.
\subsection{Notation} We denote the standard $L^2$ inner product by $(\cdot, \cdot)$. The norms in Sobolev spaces $L^p(\Omega)$ and $W^{q, p}(\Omega)$ are denoted by $|\cdot|_{L^p}$ and $|\cdot|_{W^{q, p}}$, respectively, where $1 \leq p \leq \infty$, $1 \leq q < \infty$. The norms in Bochner spaces $W^{q, p}(0,T; W^{r, s}(\Omega))$ are denoted by $\|\cdot\|_{W^{q, p}W^{r,s}}$, where $0 \leq q, r < \infty$, $1 \leq p, s \leq \infty$. We also introduce the spaces $\dot{H}^s(\Omega)=H_0^1(\Omega) \cap H^s(\Omega)$, for $1 \leq s \leq 2$.\\
\indent The constants $0< C_i < \infty$, $i \in \mathbb{N}$, appearing in the estimates denote generic constants that might depend on the coefficients in the equation and the domain $\Omega$, but not on the mesh size. Throughout the paper, we assume $T>0$ to be a fixed time horizon.
\subsection{Auxiliary inequalities} Let $\Omega \subset \mathbb{R}^d$, where $d \in \{1, 2 ,3\}$, be a bounded domain with Lipschitz regular boundary.  The nonlinear terms appearing in the Westervelt equation are of a quadratic type, so after variational testing, we often have to employ H\"older's inequality for a product of three functions. In particular, we frequently make use of the following three special cases of H\"older's inequality:
\begin{equation*}
    \begin{aligned}
        |fgh|_{L^1} \leq& \, |f|_{L^p}|g|_{L^q}|h|_{L^r} \quad \text{for  } f \in L^p(\Omega), \ g \in L^q(\Omega), \ h \in L^r(\Omega), 
    \end{aligned}
\end{equation*}
with $(p,q,r) \in \{(2, 4, 4), (3, 6, 2), (\infty, 2, 2)\}$. We also often employ a special case of Young's $\varepsilon$-inequality in the form
\begin{align} \label{Young_inequality}
    xy \leq \varepsilon x^2+\frac{1}{4 \varepsilon} y^2, \quad \text{where} \ x,y>0, \, \varepsilon>0;
\end{align}
see~\cite[Appendix B]{Evans}. Let $u$ and $v$ be non-negative continuous functions and $C_1,C_2<\infty$ non-negative constants such that
$$u(t) + v(t) \leq C_1+C_2 \int_0^t u(s) \, \textup{d}s \quad \text{ for all } t\in [0,T].$$ Then the following modification of Gronwall's inequality holds
\begin{equation} u(t) + v(t) \leq C_1 e^{C_2 T} \quad \text{ for all } t\in[0,T];
\label{Gronwall_inequality}
\end{equation}
see~\cite[Lemma 3.1]{garcke2017well}. Finally, we recall the Sobolev embeddings
\begin{equation} \label{embed_constants}
\begin{aligned}
 f \in H_0^1(\Omega) &\hookrightarrow L^p(\Omega), && \quad |f|_{L^p} \leq C_{H_0^1, L^p} |f|_{H^1},  \\
\end{aligned}
\end{equation}
for $1 \leq p \leq 6$, where $C_{H_0^1, L^p}< \infty$, noting that $d\leq 3$. 
\subsection{Finite element spaces} We consider the discretization in space by continuous piecewise linear finite elements that vanish on the boundary. Let $\Omega \subset \mathbb{R}^d$, $d \in \{2,3\}$, be a convex polygonal domain. For $h \in (0, \overline{h}]$, let $\mathcal{T}_h$  be a triangulation of $\Omega$ made of triangles (in $\mathbb{R}^2$) or of tetrahedrons (in $\mathbb{R}^3$) so that $\Omega=\cup_{K \in \mathcal{T}_h} K$. We denote by $P_{1}(K)$ the space of polynomials on $K$ of degree no greater than $1$. We introduce the finite element space as
\begin{align} \label{FEM_space}
    S_h=\{u_h \in H_0^1(\Omega): \ u_{h} \vert_{K}\, \in P_1(K), \, \forall K \in \mathcal{T}_h \}.
\end{align}
We assume that $\{\mathcal{T}_h\}_{0<h \leq \overline{h}}$ is a quasiuniform family: there are constants $0< c_1, c_2 < \infty$ such that
$$c_1 h \leq h_K \leq c_2 \varrho_K, \quad K \in \mathcal{T}_h,$$
where $h_K$ denotes the diameter of the triangle (tetrahedron) K, $\varrho_K$ stands for the diameter of the greatest ball (sphere) included in K, and $h= \max_{K \in \mathcal{T}_h} h_K.$ \\
\indent It is known that there exists $\mathcal{U} \in S_h$ and $0< C <\infty$ such that
\begin{equation} \label{approx_property_1}
\begin{aligned} 
 |u-\mathcal{U}|_{L^2} \leq& \, Ch^{s}|u|_{H^s}, \\
 |\nabla (u-\mathcal{U})|_{L^2} \leq& \, C h^{s-1}|u|_{H^s},
\end{aligned}
\end{equation}
for $u \in \dot{H}^{s}(\Omega)$, $1 \leq s \leq 2$; see~\cite{girault2012finite}. \\[2mm]
\noindent \textbf{Inverse estimates.}  Under the assumptions made above on the family $\{S_h\}_{0<h \leq \overline{h}}$, there is a $0< C_{\textup{inv}}< \infty$ such that 
\begin{align} \label{inverse_estimate}
& |\chi|_{L^\infty} \leq C_{\textup{inv}} h^{-d/p}|\chi|_{L^p}, \quad 1 \leq p < \infty,
\end{align}
for every $\chi \in S_h$; see~\cite[Theorem 4.5.11]{brenner2007mathematical}. We will need the special cases $p=2$ and $p=4$ in the proofs. \\[2mm]
\noindent \textbf{Bounds for the interpolation error.} In our analysis, we will employ an interpolant $I_h: W^{l, p}(\Omega) \rightarrow S_h$  of Scott--Zhang type, where $0 \leq l \leq 1$, $1 \leq p \leq \infty$; cf.~\cite{brenner2007mathematical, scott1990finite}. The following approximation and stability properties hold:
\begin{equation} \label{est_interpolant}
\begin{alignedat}{2}
   |v-I_h v|_{L^2} \leq& \, C_{\textup{app}} h^s |v|_{H^{s}}, \quad && \text{for } v \in H^s(\Omega), \ 1 \leq s \leq 2,\\
   |I_h v|_{L^\infty} \leq& \, C_{\textup{st}} |v|_{L^\infty}, \quad \ &&\text{for }  v \in L^\infty(\Omega),
\end{alignedat}
\end{equation}
where $0 < C_{\textup{app}}, C_{\textup{sta}} <\infty$; see~\cite[Theorem 4.8.12 and Corollary 4.8.15]{brenner2007mathematical}.
\section{The continuous problem} \label{Section:Continuous problem}
We start from the following initial-boundary value problem for the Westervelt equation
\begin{align} \label{Westervelt_continuous}
\begin{cases}
\displaystyle u_{tt}-c^2\Delta u-b \Delta u_{t}=2 k \left(u u_{tt}+ u_{t}^2 \right) \quad \text{in } \Omega \times (0,T),
\smallskip\\ u=0 \quad \text{ on }  \partial \Omega \times (0,T),  
\smallskip\\ \displaystyle \left(u, u_{t}\right)=(u_0, u_1) \quad \text{ on }   \Omega \times \{t=0\}.
\end{cases}
\end{align}
The weak form of the problem is then given by
\begin{align} \label{Westervelt_continuous_weak}
    \begin{cases}
\left( (1-2ku) u_{tt} , \phi \right) + c^2(\nabla u, \nabla \phi)+b\left(\nabla u_{t}, \nabla \phi \right)- 2k\left(u^2_{t}, \phi \right)=0,\smallskip \\
\text{for all } \phi \in H_0^1(\Omega) \text{ a.e. in time}, \smallskip\\
  \left (u(0), u_{t}(0) \right)=(u_{0}, u_{1}).
    \end{cases}
\end{align}
This problem is known to be well-posed for small data.
\begin{theorem}\label{wellposedness}\textup{ \cite[Theorem 3.1]{KaltenbacherLasiecka_Westervelt}}
Let $T>0$, $b, k, c^2>0$, $0<m < \frac{1}{4k}$, and $M>0$ be arbitrary. Assume that 
$$|u_0|^2_{H^2}+|u_1|^2_{H^2} \leq \varrho_T,$$ with $\varrho_T$ sufficiently small. Then there exists a unique solution $u$ of \eqref{Westervelt_continuous_weak} such that
\begin{equation} \label{ball_continuous}
\begin{aligned}
u  \in \mathcal{B}= \left\{ u \in L^{\infty}(\Omega \times (0,T)) :\right.  &\left. \, \|u\|_{L^\infty(\Omega \times (0,T))} \leq m, \, \|u_{tt}\|_{L^2 H^1} \leq M, \right. \\[1mm]
&  \left. \, \| u_{t} \|_{C H^1} \leq M, \ (u(0), u_t(0))=(u_0, u_1) \right\},
\end{aligned}
\end{equation}
and such that
$\Delta u,\ u_{tt},\ \nabla u_t  \in L^\infty(0,T; L^2(\Omega)), \ \nabla u_{tt} \in L^2(0,T; L^2(\Omega)).$
\end{theorem}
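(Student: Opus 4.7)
The strategy is a Banach fixed-point argument built around a linearization of \eqref{Westervelt_continuous}. For a given $v$ in the ball $\mathcal{B}$, define $u=\mathcal{F}(v)$ as the solution of the linear variable-coefficient strongly damped wave equation
\begin{equation*}
(1-2kv)\,u_{tt} - c^2 \Delta u - b\,\Delta u_t \,=\, 2k\,v_t^2, \qquad u\vert_{\partial\Omega}=0, \quad (u,u_t)\vert_{t=0}=(u_0,u_1).
\end{equation*}
Since $v\in\mathcal{B}$ satisfies $\|v\|_{L^\infty(\Omega\times(0,T))}\le m<1/(4k)$, the leading coefficient is pointwise bounded below by $1-2km>1/2$, ruling out degeneracy and allowing one to treat the equation as a linear strongly damped model with a bounded, non-degenerate variable coefficient.

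The first main step is to prove existence, uniqueness and a priori estimates for this linearization. A Galerkin approximation in the eigenbasis of the Dirichlet Laplacian produces smooth approximants on which the energy method can be applied rigorously. One performs successive tests with $u_t$, with $-\Delta u_t$ and with $u_{tt}$ and, after differentiating the equation in time, once more with $u_{tt}$. The source $2kv_t^2$ is controlled in $L^2(0,T;H^1)$ by $\|v_t\|_{CH^1}^2$ via the embedding $H^1\hookrightarrow L^4$ in \eqref{embed_constants}; the commutator terms coming from differentiating $(1-2kv)u_{tt}$ in time, of the shape $(kv_t u_{tt},\cdot)$, are handled by the H\"older triples $(\infty,2,2)$ and $(3,6,2)$ combined with Young's inequality \eqref{Young_inequality}. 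Elliptic regularity applied to the pointwise identity $c^2\Delta u = (1-2kv)u_{tt}-b\Delta u_t - 2kv_t^2$ then gives $\Delta u\in L^\infty(0,T;L^2(\Omega))$, and the Gronwall inequality \eqref{Gronwall_inequality} absorbs lower-order contributions into an estimate of the form
\begin{equation*}
\|u_{tt}\|_{L^2 H^1}^2 + \|u_t\|_{CH^1}^2 + \|\Delta u\|_{CL^2}^2 \,\le\, C_*\bigl(|u_0|_{H^2}^2+|u_1|_{H^2}^2\bigr) + C_*\,\Psi(m,M),
\end{equation*}
with $C_*$ independent of $v$ and $\Psi$ polynomial in the ball parameters.

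The second step is to check that $\mathcal{F}$ maps $\mathcal{B}$ into itself for small data: the bounds $\|u_t\|_{CH^1},\|u_{tt}\|_{L^2H^1}\le M$ follow from the estimate above by choosing $\varrho_T$ sufficiently small relative to $M$, while the $L^\infty$-bound $\|u\|_{L^\infty(\Omega\times(0,T))}\le m$ is obtained from $H^2(\Omega)\hookrightarrow L^\infty(\Omega)$ (valid for $d\le 3$) applied to the $CL^2$-bound on $\Delta u$, again by smallness of $\varrho_T$ and $M$. The third step, the contraction, applies the same machinery at a lower regularity level to the difference $w=\mathcal{F}(v^{(1)})-\mathcal{F}(v^{(2)})$, which satisfies an analogous linear problem with source $2k\bigl(v^{(1)}-v^{(2)}\bigr)u^{(2)}_{tt}+2k\bigl(v^{(1)}_t+v^{(2)}_t\bigr)\bigl(v^{(1)}_t-v^{(2)}_t\bigr)$ and vanishing initial data. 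This yields $\|w\|_{C^1 H^1}\le L_*(\varrho_T,M)\,\|v^{(1)}-v^{(2)}\|_{C^1 H^1}$ with $L_*<1$ for $\varrho_T$ small enough; since $\mathcal{B}$ is closed in the $C^1 H^1$ topology, Banach's fixed-point theorem produces the unique solution $u\in\mathcal{B}$ with the claimed higher regularity.

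\textbf{Main obstacle.} The delicate point is the simultaneous closure of the self-mapping property and the non-degeneracy condition $1-2ku>0$. Because the $L^\infty$ bound on $u$ is not available directly from the energy identity but only through $H^2\hookrightarrow L^\infty$, the argument must be carried out one full level of elliptic regularity above the base energy, and the quadratic cross-terms $v_t u_{tt}$, $v_t^2$ appearing at that level must be absorbed via smallness of $\varrho_T$ rather than of $T$. Getting the constants to chase consistently, so that the ball parameters $m$ and $M$ remain compatible with $1/(4k)$ and with a contraction constant $L_*<1$, is where the quantitative work of the proof concentrates.
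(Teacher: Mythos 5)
This theorem is quoted in the paper without proof, as a citation of \cite[Theorem 3.1]{KaltenbacherLasiecka_Westervelt}, so there is no in-paper argument to compare against; your fixed-point outline --- linearize with the frozen coefficient $1-2kv$, derive energy estimates one elliptic level above the base energy, recover the $L^\infty$ bound and the non-degeneracy through $H^2(\Omega)\hookrightarrow L^\infty(\Omega)$, and contract in the weaker $C^1H^1$ topology --- is exactly the strategy of that reference, and it is also the template the paper itself adapts for the semi-discrete problem in Theorem~\ref{thm:Westervelt}. The outline is sound as far as it goes; the substance of the proof lies in the constant-chasing you explicitly defer, but no step in the plan would fail.
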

We also refer to \cite{meyer2011optimal} where the results of Theorem~\ref{wellposedness} are generalized by employing the maximal $L^p$ regularity approach. Results on the existence of very smooth solutions for a reformulation of the problem in terms of the acoustic velocity potential $\psi$, where $u= \varrho \psi_t$, can be found in~\cite{kaltenbacher2014efficient, kawashima1992global}. \\
\indent Note that the well-posedness holds for sufficiently small data which by continuity implies smallness of $u$ in the appropriate norms. The condition $\|u\|_{L^\infty(\Omega \times (0,T))} \leq m < 1/(4k)$ in \eqref{ball_continuous} ensures that the equation does not degenerate. For the well-posedness of the semi-discrete problem, we will also need smallness of data and a bound on the approximate solution that guarantees non-degeneracy. It is also worth noting that the strong damping (i.e., $b>0$) is needed for the continuous problem to be well-posed and the same will hold for the semi-discrete equation.  \\
\indent Going forward, we assume that \eqref{Westervelt_continuous} has a unique solution. We will impose additional conditions on the regularity of $u$ when needed for the convergence results.
\section{Finite element approximation of the linearized Westervelt equation with variable coefficients}\label{Section:LinVariableCoeff}
We first provide numerical analysis of an initial-boundary value problem for a linear wave equation with variable coefficients which can be interpreted as a linearization of the Westervelt equation. We study the following initial boundary value problem for a non-degenerate equation:
\begin{align} \label{Westervelt_lin_continuous}
\begin{cases}
\displaystyle \alpha(x,t)u_{tt}-c^2\Delta u-b \Delta u_t+\beta(x,t)u_t=f(x,t) \quad \text{in } \Omega \times (0,T),
\smallskip\\ u=0 \quad \text{ on }  \partial \Omega \times (0,T),  
\smallskip\\ \left(u, \displaystyle u_t \right)=(u_0, u_1) \quad \text{ on }   \Omega \times \{t=0\},
\end{cases}
\end{align}
where $0 < \alpha_0 \leq \alpha(x,t) \leq \alpha_1$ a.e. in $\Omega \times (0,T)$. Analysis of the linearization \eqref{Westervelt_lin_continuous} allows to later define an iterative map on which we will apply the Banach fixed-point theorem. However, finite element approximation of the partial differential equation in \eqref{Westervelt_lin_continuous} is also of independent interest. For example, this model with $b=f=0$ appears in~\cite{cavalcanti2002existence} and is motivated by the
study of the transonic gas dynamics. The adjoint problems for the Westervelt equation which arise in the optimal control and shape optimization works~\cite{clason2009boundary, kaltenbacher2016shape, muhr2017isogeometric} have (after time reversal) the form of this PDE as well. \\
\indent We refer to~\cite[Proposition 7.2]{kaltenbacher2014efficient} for the sufficient conditions under which problem \eqref{Westervelt_lin_continuous} has a unique solution $(u, u_t)$ in $C([0,T]; H^{j}(\Omega) \times H^{j-1}(\Omega))$, where $j \in \{2, 4\}$. We therefore proceed with the assumption that problem \eqref{Westervelt_lin_continuous} has a unique solution. The conditions on the regularity of $u$ are specified when needed for the a priori estimates. It is implicitly assumed that the coefficients $\alpha$ and $\beta$, the initial data $(u_0, u_1)$, and the source term $f$ are sufficiently smooth for such a regularity to hold.  \\
\indent Results on the error estimates for special cases of \eqref{Westervelt_lin_continuous} with constant coefficients are available in the literature. Analysis of the Galerkin approximation of \eqref{Westervelt_lin_continuous} for the case $\alpha=1$, $b=0 $, and $\beta=0$ is performed in~\cite{baker1976error}. The case of a strongly damped wave equation (i.e. with a fixed positive constant $b$) and with $\alpha=1$, $\beta=f=0$ is analyzed in~\cite{larsson1991finite, sinha2003effect, thomee2004maximum}. \\
\indent Let $\{S_h\}_{0<h \leq \overline{h}}$ be a family of subspaces of $H_0^1(\Omega)$ defined in \eqref{FEM_space} with basis $\{w_i\}_{i=1}^{N_h}$. We consider Galerkin approximations in space
\begin{align*}
& u_h(x,t)=\displaystyle \sum_{i=1}^{N_h} \xi_i(t) w_i(x), 
\end{align*}
where $\xi_i: (0,T) \to \mathbb{R}$ are coefficient functions for $i\in[1, N_h]$. Let $\alpha_h$, $\beta_h$, and $f_h$ be approximations of functions $\alpha$, $\beta$, and $f$, respectively, in $S_h$. 
\begin{assumption} \label{assumption_2} We assume that the approximate coefficients and the source term satisfy the following conditions \smallskip
\begin{itemize}
\item $\alpha_h \in L^\infty(0,T; L^\infty(\Omega))$, \ $\exists \ \alpha_0: \, \alpha_h \geq \alpha_0>0$ a.e. in $\Omega \times (0,T)$, \smallskip
\item $\beta_h \in L^\infty(0,T; L^3(\Omega))$,\smallskip
\item $f_h \in L^2(0,T; L^2(\Omega))$. 
\end{itemize}
\end{assumption}
\noindent For a given $h \in (0, \overline{h}]$, we next study a semi-discretization of \eqref{Westervelt_lin_continuous} in $S_h$ and prove that it has a unique solution.
\begin{theorem}\label{thm:existence_linear} Let $c^2$, $b>0$ and let Assumption~\ref{assumption_2} hold. For each $h \in (0, \overline{h}]$, there exists a unique function $u_h \in H^2(0,T; S_h)$ which satisfies 
\begin{equation} \label{weak_form_discrete}
\begin{aligned}
\left( \alpha_h u_{h, tt} , \phi \right) + c^2(\nabla u_h, \nabla \phi)+b\left(\nabla u_{h, t}, \nabla \phi \right)+ \left(\beta_h u_{h, t}, \phi \right)  =(f_h, \phi),
\end{aligned}
\end{equation}
for all $\phi \in S_h$, a.e. in time, and
\begin{equation} \label{discrete_initial_data}
\begin{aligned}
&   \left (u_h(0), u_{h, t}(0) \right)=(u_{h,0}, u_{h,1}),
\end{aligned}
\end{equation}
where $u_{h,0}$ and $u_{h,1}$ are approximations of $u_0$ and $u_1$ in $S_h$. Moreover, the following a priori bound holds
\begin{equation} \label{energy_est_lin}
\begin{aligned}
        & \left\|u_{h, tt}\right\|^2_{L^2L^2}+ \|\nabla u_h\|^2_{L^\infty L^2}+\left\|\nabla u_{h, t}\right\|^2_{ L^\infty L^2} +\left\|\nabla u_{h, t}\right\|^2_{L^2L^2}  \\[1mm] 
          \leq& \, C(\alpha_h, \beta_h, T) \,\left(|\nabla u_{h,0}|^2_{L^2}+\left|\nabla u_{h,1} (0)\right|^2_{L^2}+\|f_h\|^2_{L^2L^2}\right).
\end{aligned}
\end{equation}
The constant above is given by
\begin{equation} \label{Thm1_const}
\begin{aligned}
C(\alpha_h, \beta_h, T)= C_1\, \textup{exp}(C_2 \,(\|\alpha_h\|^2_{L^\infty L^3}+\| \beta_h\|_{L^\infty L^3}^2+1)\, T).
\end{aligned}
\end{equation}
\end{theorem}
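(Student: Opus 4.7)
Expanding $u_h(x,t)=\sum_{i=1}^{N_h}\xi_i(t)w_i(x)$ and testing \eqref{weak_form_discrete} against each $w_j$, the semi-discrete problem reduces to the linear second-order ODE system
\begin{equation*}
M_\alpha(t)\xi''(t)+\bigl(bK+N_\beta(t)\bigr)\xi'(t)+c^2K\xi(t)=F(t),
\end{equation*}
where $(M_\alpha(t))_{ij}=(\alpha_h w_j,w_i)$, $K_{ij}=(\nabla w_j,\nabla w_i)$, $(N_\beta(t))_{ij}=(\beta_h w_j,w_i)$ and $F_i(t)=(f_h,w_i)$. The pointwise lower bound $\alpha_h\geq\alpha_0>0$ from Assumption~\ref{assumption_2} ensures that $M_\alpha(t)$ is uniformly positive definite, hence invertible with $t\mapsto M_\alpha^{-1}(t)$ essentially bounded. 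The resulting system has $L^\infty$-in-time coefficients and $L^2$-in-time right-hand side, so Carath\'eodory's theorem produces a unique $\xi\in H^2(0,T;\mathbb{R}^{N_h})$ matching the initial data \eqref{discrete_initial_data}; equivalently $u_h\in H^2(0,T;S_h)$.

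\textbf{First energy test: $\phi=u_{h,tt}$.} Since $u_{h,t}(t),u_{h,tt}(t)\in S_h$ for a.e.\ $t$, this choice is admissible. Using
\begin{equation*}
(\nabla u_h,\nabla u_{h,tt})=\tfrac{d}{dt}(\nabla u_h,\nabla u_{h,t})-|\nabla u_{h,t}|_{L^2}^2,\qquad(\nabla u_{h,t},\nabla u_{h,tt})=\tfrac{1}{2}\tfrac{d}{dt}|\nabla u_{h,t}|_{L^2}^2,
\end{equation*}
together with $(\alpha_h u_{h,tt},u_{h,tt})\geq\alpha_0|u_{h,tt}|_{L^2}^2$, I estimate $(\beta_h u_{h,t},u_{h,tt})$ via H\"older with exponents $(3,6,2)$, the Sobolev embedding $H_0^1\hookrightarrow L^6$, and Young's $\eps$-inequality \eqref{Young_inequality} to absorb a small multiple of $|u_{h,tt}|_{L^2}^2$ on the left. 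After time integration, the boundary cross term $c^2(\nabla u_h(t),\nabla u_{h,t}(t))$ is bounded below via Young's inequality by $-\tfrac{b}{4}|\nabla u_{h,t}(t)|_{L^2}^2-(c^4/b)|\nabla u_h(t)|_{L^2}^2$, while $|\nabla u_h(t)|_{L^2}^2$ is controlled from $|\nabla u_{h,0}|_{L^2}^2$ and $\int_0^t|\nabla u_{h,t}(s)|_{L^2}^2\,\textup{d}s$ by the fundamental theorem and Cauchy--Schwarz in time. This step delivers $\int_0^t|u_{h,tt}|_{L^2}^2\,\textup{d}s$, $|\nabla u_h(t)|_{L^2}^2$ and $|\nabla u_{h,t}(t)|_{L^2}^2$ up to an $\int_0^t(1+|\beta_h|_{L^3}^2)|\nabla u_{h,t}|_{L^2}^2\,\textup{d}s$ term.

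\textbf{Second energy test: $\phi=u_{h,t}$, and Gronwall.} To produce $\|\nabla u_{h,t}\|_{L^2L^2}$ with the explicit $\|\alpha_h\|_{L^\infty L^3}$ dependence visible in \eqref{Thm1_const}, I test a second time with $\phi=u_{h,t}$. Then $(\nabla u_h,\nabla u_{h,t})=\tfrac{1}{2}\tfrac{d}{dt}|\nabla u_h|_{L^2}^2$ and the term $b|\nabla u_{h,t}|_{L^2}^2$ appears directly on the left. The obstructive cross term $(\alpha_h u_{h,tt},u_{h,t})$ is handled by H\"older $(3,6,2)$ and $H_0^1\hookrightarrow L^6$, producing the factor $|\alpha_h|_{L^3}$; the analogous $(\beta_h u_{h,t},u_{h,t})$ is bounded via H\"older with exponents $3,3,3$ and $H_0^1\hookrightarrow L^3$. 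Taking a convex combination of the two inequalities with weights chosen so that the Young residuals in $|u_{h,tt}|_{L^2}^2$ and $|\nabla u_{h,t}|_{L^2}^2$ are absorbed on the left, and replacing $|\alpha_h(\cdot,t)|_{L^3}$, $|\beta_h(\cdot,t)|_{L^3}$ by their $L^\infty L^3$-norms pointwise, one arrives at an inequality of the form
\begin{equation*}
E(t)\leq D_0+C_2\bigl(\|\alpha_h\|_{L^\infty L^3}^2+\|\beta_h\|_{L^\infty L^3}^2+1\bigr)\int_0^t E(s)\,\textup{d}s,
\end{equation*}
where $E(t)$ groups the five quantities on the left of \eqref{energy_est_lin} and $D_0$ collects the initial-data and source contributions. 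The modified Gronwall inequality \eqref{Gronwall_inequality} then delivers \eqref{energy_est_lin} with exactly the constant \eqref{Thm1_const}.

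\textbf{Main obstacle.} The principal difficulty is that $\alpha_h$ is merely essentially bounded in time and multiplies the highest-order time derivative; no regularity of $\partial_t\alpha_h$ is available, so the classical single-test energy identity (from $\phi=u_{h,t}$ plus integration by parts in time) is unavailable. This forces the combined two-test-function strategy, in which the H\"older $(3,6,2)$/Sobolev embedding pair is precisely what turns the $\alpha_h$-contribution into the weight $\|\alpha_h\|_{L^\infty L^3}^2$ appearing in \eqref{Thm1_const}; the rest is careful bookkeeping to absorb residual $|u_{h,tt}|_{L^2}^2$ and $|\nabla u_{h,t}|_{L^2}^2$ terms between the two tests before closing through Gronwall.
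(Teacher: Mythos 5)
Your proposal is correct and follows essentially the same route as the paper: reduction to a linear ODE system with a uniformly positive definite mass matrix for existence, then the combined two-test energy argument with $\phi=u_{h,t}$ and $\phi=u_{h,tt}$, H\"older $(3,6,2)$ with the $H_0^1\hookrightarrow L^6$ embedding and Young's inequality to handle the $\alpha_h$- and $\beta_h$-terms, integration by parts in time for $c^2(\nabla u_h,\nabla u_{h,tt})$, and Gronwall. The only differences are cosmetic (order of the two tests, controlling $|\nabla u_h(t)|_{L^2}^2$ by the fundamental theorem of calculus rather than by absorbing it into a $\lambda$-weighted copy of the first inequality, and a $(3,3,3)$ instead of $(3,6,2)$ H\"older split for the $\beta_h$-term), and you correctly identify the absence of any assumption on $\alpha_{h,t}$ as the reason the single-test identity is unavailable.
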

\begin{proof}
The proof follows a general framework of the well-posedness proofs for the linearizations of the classical nonlinear acoustic equations that are based on the Galerkin approximations in space. In particular, we refer to~\cite[Theorem 1]{kaltenbacher2016shape} and \cite[Proposition 1]{fritz2018well}. However, for the continuous problem, the basis functions have to be in $H^2(\Omega)$ to later guarantee the non-degeneracy of the nonlinear model via the embedding $H^2(\Omega) \hookrightarrow L^\infty(\Omega)$. Our basis functions are only $H^1$ regular which changes the a priori estimates that we will derive. \\[2mm]
\noindent \textbf{Step 1: Existence of a solution.} We denote by $\xi_{h, 0}=[\xi_{1,0} \ldots \xi_{N_h, 0}]^T$ and $\xi_{h, 1}=[\xi_{1,1} \ldots \xi_{N_h, 1}]^T$ the components of the given initial approximations $u_{h,0}$ and $u_{h,1}$, respectively. Then our semi-discrete problem is to find $\xi_h=[\xi_1 \ldots \xi_{N_h}]^T$  such that
\begin{align} \label{matrix_equation}
\begin{cases}
\displaystyle M_h(t) \xi_{h, tt}+K_h \xi_h+C_h(t) \xi_{h, t}=F_h, \\
\xi_h(0)=\xi_{h, 0},  \\
\displaystyle \xi_{h, t}(0)=\xi_{h, 1},
\end{cases}
\end{align}
where the matrices are given by
\begin{equation*}
\begin{aligned}
M_h(t)=& \, [M_{ij}], \quad  && M_{ij}=(\alpha_h(t)\, w_i, w_j), \\
K_h=& \,[K_{ij}], \quad  && K_{ij}=c^2 \, (\nabla w_i,  \nabla w_j), \\
C_h(t)=& \, [C_{ij}], \quad   && C_{ij}=b \, (\nabla w_i, \nabla w_j)+ (\beta_h(t) \, w_i, w_j),
\end{aligned}
\end{equation*}
and the source term is given by $F_h=[F_1 \ldots F_{N_h}]^{T}$,  $F_j=(f_h, w_j)$, with $1 \leq i,j \leq N_h$. Note that the matrices and the right-hand side vector are all well-defined since
\begin{align*}
&|(\alpha_h\, w_i, w_j)| \leq |\alpha_h|_{L^2}|w_i|_{L^4}|w_j|_{L^4}, \\
&|(\beta_h \, w_i, w_j)| \leq |\beta_h|_{L^2}|w_i|_{L^4}|w_j|_{L^4}, \\
& |(f_h, w_i)|\leq |f_h|_{L^2}|w_i|_{L^2},
\end{align*}
a.e. in time. Furthermore, the matrix $M_h$(t) is invertible for a.e. $t \in [0,T]$; cf.~\cite[Theorem 1]{kaltenbacher2016shape}. The statement follows from the fact that $M_h(t)$ is positive definite. Indeed, for any $z \in \mathbb{R}^{N_h} \setminus \{0\}$, we have
\begin{align*} 
z^T M_h(t)z=  \bigintsss_{\Omega} \alpha_h(t) \left|\sum_{i=1}^{N_h} z_i w_i \right|^2 \, \textup{d}x \geq \alpha_0 \left|\sum_{i=1}^{N_h} z_i w_i \right|^2_{L^2}>0,
\end{align*}
for a.e. $t \in [0,T]$. Thanks to the fact that $M_h$ is invertible, the matrix equation in \eqref{matrix_equation} can be rewritten as
\begin{align*}
 \xi_{h, tt}+M_h^{-1}(t)C_h(t) \xi_{h, t}+M_h^{-1}(t) K_h \xi_h=M_h^{-1}(t) F_h.
\end{align*}
Now the existence of a solution $u_h \in H^2(0,T_h; S_h)$ follows from the standard ODE theory; see, for example,~\cite[Chapter 1]{roubivcek2013nonlinear}. To extend the existence interval to $[0,T]$, we next show that $u_h$ remains bounded on $[0,T]$ in the appropriate norms. \\[2mm]
\noindent \textbf{Step 2: A priori estimate.}  We want to derive an priori bound for $u_h$. To this end, we test our problem with two different test functions.  We first test \eqref{weak_form_discrete} with $\phi=\lambda  u_{h, t} \in S_h$, where $\lambda>0$, and integrate with respect to time from $0$ to $t$, $t \leq T_h$. After some standard manipulations, this action results in 
\begin{equation} \label{est_1}
\begin{aligned}
&\lambda \frac{c^2}{2}|\nabla u_h(t)|^2_{L^2} +\lambda b\left\|\nabla u_{h, t} \right\|^2_{L^2L^2}\\
\leq & \, \lambda \frac{c^2}{2} |\nabla u_{h, 0}|^2_{L^2}+\eps \left\|u_{h, tt} \right\|^2_{L^2L^2}+\frac{1}{4 \varepsilon}\lambda^2C_{H_0^1, L^2}^2\|f_h\|^2_{L^2L^2}\\
&+\left(\frac{1}{4 \eps}C^2_{H_0^1, L^6}\lambda^2\left(\|\alpha_h\|^2_{L^\infty L^3}+C^2_{H_0^1, L^2}\|\beta_h\|^2_{L^\infty L^3}\right)+2\eps\right)\left\|\nabla u_{h, t}\right\|^2_{L^2L^2},
\end{aligned}
\end{equation}
where $\varepsilon>0$ and $\lambda>0$ will be conveniently chosen. To be able to bound the term $\displaystyle \left\|u_{h, tt} \right\|^2_{L^2L^2}$ that appears on the right-hand side above, we next test \eqref{weak_form_discrete} with $\phi= u_{h, tt} \in S_h$. After integrating over $(0, t)$, this action yields the second inequality
\begin{equation} \label{est_2}
\begin{aligned}
&(\alpha_0-2\varepsilon)\left \|u_{h, tt} \right\|^2_{L^2L^2}+\frac{b}{4}\left|\nabla u_{h, t}(t)\right|^2_{L^2}\\
\leq& \,\frac{c^2}{2}|\nabla u_{h,0}|^2_{L^2}+\frac{c^2+b}{2}|\nabla u_{h,1}|^2_{L^2}+\frac{c^4}{b} |\nabla u_h(t)|^2_{L^2}\\
&+\left(\frac{1}{4 \eps}C^2_{H_0^1, L^6}\|\beta_h\|^2_{L^\infty L^3}+c^2\right)\left\|\nabla u_{h, t}\right\|^2_{L^2L^2}+\frac{1}{4 \eps}\|f_h\|^2_{L^2L^2}.
\end{aligned}
\end{equation}
\indent Above, we have estimated the $c^2$ term by first integrating by parts with respect to time and then employing H\"older's inequality and Young's $\varepsilon$-inequality with $\varepsilon \in \{ b/4, 1/2\}$:
\begin{equation} \label{tricky_term}
\begin{aligned}
&-c^2\int_0^t \int_{\Omega} \nabla u_h \cdot \nabla u_{h, tt} \, \textup{d}x \textup{d}s \\
=& \,-c^2 \int_{\Omega} \nabla u_h(s) \cdot \nabla u_{h, t}(s) \, \textup{d}x \, \Bigr \vert_0^t+c^2\int_0^t \int_{\Omega} \left|\nabla u_{h, t}\right|^2 \, \textup{d}x \textup{d}s \\
\leq&\, c^2 |\nabla u_h(t)|_{L^2}\left|\nabla u_{h, t}(t) \right|_{ L^2}+c^2 |\nabla u_{h, 0}|_{L^2}\left|\nabla u_{h, 1}\right|_{L^2}+c^2\left\|\nabla u_{h, t}\right\|^2_{L^2L^2}\\
\leq&\, \frac{c^4}{b} |\nabla u_h(t)|^2_{L^2}+\frac{b}{4}\left|\nabla u_{h, t}(t) \right|^2_{L^2}+\frac{c^2}{2} |\nabla u_{h, 0}|^2_{L^2}+\frac{c^2}{2}\left|\nabla u_{h, 1}\right|^2_{L^2}\\
&+c^2\left\|\nabla u_{h, t}\right\|^2_{L^2L^2}.
\end{aligned}
\end{equation}
To absorb $\displaystyle \frac{c^4}{b} |\nabla u_h(t)|^2_{L^2}$ by the corresponding term on the left side in \eqref{est_1}, we need to choose $\lambda>0$ sufficiently large so that $\lambda c^2/2>c^4/b$. By adding the derived inequalities \eqref{est_1} and \eqref{est_2}, we then obtain
\begin{equation} \label{est_before_Gronwall}
\begin{aligned}
 &  (\alpha_0-3\varepsilon) \left\|u_{h, tt}\right\|^2_{L^2L^2}+ \left(\lambda \frac{c^2}{2}-\frac{c^4}{b}\right)|\nabla u_h(t)|^2_{L^2}+\frac{b}{4}\left|\nabla u_{h, t} (t)\right|^2_{L^2}\\
 &+\lambda b\left\|\nabla u_{h, t}\right\|^2_{L^2L^2}\\
 \leq& \, (\lambda+1) \frac{c^2}{2}|\nabla u_{h, 0}|^2_{L^2}+\frac{c^2+b}{2}\left|\nabla u_{h, 1} \right|^2_{L^2}+\frac{1}{4 \eps}\left(\lambda^2 C^2_{H_0^1, L^2}+1\right)\|f_h\|^2_{L^2L^2}\\
 &\!\begin{multlined}[t]+\left\|\nabla u_{h, t}\right\|^2_{L^2L^2}\left(2\eps+c^2+\frac{1}{4 \eps}C^2_{H_0^1, L^6}\left(1+\lambda^2C^2_{H_0^1, L^2}\right)\|\beta_h\|^2_{L^\infty L^3}\right.\\
 \left.+ \frac{1}{4 \eps}C^2_{H_0^1, L^6}\lambda^2\|\alpha_h\|^2_{L^\infty L^3} \right).\end{multlined}
\end{aligned}
\end{equation}
We choose $\lambda=4c^2/b$ and $\varepsilon=\alpha_0/6$, apply Gronwall's inequality to \eqref{est_before_Gronwall}, and take the essential supremum over $t \in (0,T_h)$. In this way, we obtain
\begin{equation} \label{energy_estimate}
\begin{aligned}
    & \left\|u_{h, tt}\right\|^2_{L^2(0,T_h;L^2)}+ \|\nabla u_h\|^2_{L^\infty(0,T_h; L^2)}+\left\|\nabla u_{h, t}\right\|^2_{ L^\infty(0,T_h; L^2)} \\
    &+\left\|\nabla u_{h, t}\right\|^2_{L^2(0,T_h;L^2)} \\
          \leq& \, \!\begin{multlined}[t] C_1\, \textup{exp}\, \left (C_2 \, \left(\|\alpha_h\|^2_{L^\infty(0,T; L^3)}+\| \beta_h\|_{L^\infty (0,T;L^3)}^2+1 \right) T \right)\\
          \times \,\left(|\nabla u_{h,0}|^2_{L^2}+\left|\nabla u_{h,1} \right|^2_{L^2}
          +\|f_h\|^2_{L^2(0,T;L^2)} \right). \end{multlined}
\end{aligned}
\end{equation}
The right-hand side of \eqref{energy_estimate} does not depend on $T_h$, so we can show by an argument of contradiction that we are allowed to extend the existence interval of $u_h$ to $[0,T]$; i.e., $T_h=T$ and estimate \eqref{energy_est_lin} holds.
\end{proof}
\section{A priori estimates for the linearized Westervelt equation with variable coefficients} \label{Section:LinVariableCoeff_APriori} We now focus on proving a priori estimates for the linearized Westervelt equation that also take into account approximation error of the coefficients and the source term. We wish to estimate $u-u_h$. We follow the usual approach in the finite element analysis and split this difference into
\begin{align*}
u-u_h=\underbrace{u-R_h u}_{\varrho} +\underbrace{R_h u-u_h}_{\theta},
\end{align*}
where $R_h$ denotes the elliptic projection; cf.~\cite{brenner2007mathematical, thomee1984galerkin}. The idea is to rely on the existing results on elliptic projectors to bound $\varrho=u-R_h u$, whereas $\theta=R_h u - u_h$ will be seen as a solution of a wave PDE with a source term. By deriving estimates for this PDE, we will find a bound for $\theta$.
\subsection{Auxiliary results for the elliptic projection}  We first recall two useful results for an auxiliary elliptic problem for $\varrho$. We employ the Ritz projection $R_h$, i.e., the orthogonal projection with respect to the product $(\nabla u, \nabla \phi)$.
\begin{lemma}\textup{\cite[Lemma 2.1]{baker1976error}} \label{Lemma_projector_I}
Let $u$ be the solution of \eqref{Westervelt_lin_continuous}. Then there exists a unique mapping $R_h u \in L^2(0,T; S_h)$ which satisfies
\begin{align} \label{eq_projection}
(\nabla R_h u, \nabla \phi)=(\nabla u, \nabla \phi) \quad \text{for all } \phi \in S_h, \ t \geq 0.
\end{align}
Let $1 \leq p \leq \infty$. If for some integer $k \geq 0$, $\frac{\partial^k u}{\partial t^k} \in L^p(0,T; H^s(\Omega)),$ then $\frac{\partial^k R_h u}{\partial t^k} \in L^p(0,T; S_h)$,
and
\begin{align*} 
\left\|\frac{\partial^k }{\partial t^k} (u-R_h u)\right\|_{L^pL^2} \leq C h^s \left\|\frac{\partial^k u}{\partial t^k} \right \|_{L^p H^s},
\end{align*}
for some constant $C>0$ independent of $h$ and $u$, and $1 \leq s \leq 2$. 
\end{lemma}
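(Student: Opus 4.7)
This is a standard Ritz-projection error estimate, and the plan is threefold: (i) for each fixed $t$, define $R_h u(\cdot, t)$ via an elliptic variational problem; (ii) show that $R_h$ commutes with $\partial_t^k$, so that time regularity is inherited automatically; and (iii) derive the spatial error bound via C\'ea's lemma together with an Aubin--Nitsche duality argument, then integrate (or take the essential supremum) in time to promote it to the Bochner norm.

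For step (i), for each $t \in [0,T]$ the bilinear form $a(v, \phi) = (\nabla v, \nabla \phi)$ is continuous and coercive on $H_0^1(\Omega)$ (coercivity by Poincar\'e's inequality), hence also on the subspace $S_h$. The functional $\phi \mapsto (\nabla u(\cdot,t), \nabla \phi)$ is bounded and linear on $S_h$, so the Lax--Milgram theorem delivers a unique $R_h u(\cdot,t) \in S_h$ satisfying \eqref{eq_projection}. For step (ii), because $R_h$ is a bounded linear projector on $H_0^1(\Omega)$ whose defining identity does not involve $t$, differentiating \eqref{eq_projection} $k$ times in $t$ in the distributional sense and invoking the uniqueness from step (i) yields $\partial_t^k (R_h u) = R_h (\partial_t^k u)$. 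Consequently $\partial_t^k R_h u \in L^p(0,T; S_h)$ whenever $\partial_t^k u \in L^p(0,T; H^s(\Omega))$, and it suffices to prove the pointwise-in-$t$ bound $|v - R_h v|_{L^2} \leq C h^s |v|_{H^s}$, applied with $v = \partial_t^k u$.

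For step (iii), the Galerkin orthogonality $(\nabla(v - R_h v), \nabla \phi) = 0$ for all $\phi \in S_h$, combined with C\'ea's lemma and the approximation property \eqref{approx_property_1}, gives
\begin{equation*}
|\nabla(v - R_h v)|_{L^2} \leq \inf_{\chi \in S_h} |\nabla(v - \chi)|_{L^2} \leq C h^{s-1} |v|_{H^s}.
\end{equation*}
For the sharper $L^2$ bound I invoke Aubin--Nitsche duality: let $w \in \dot{H}^2(\Omega)$ solve $-\Delta w = v - R_h v$ with Dirichlet data. Since $\Omega$ is convex polygonal, elliptic regularity yields $|w|_{H^2} \leq C |v - R_h v|_{L^2}$. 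Galerkin orthogonality followed by \eqref{approx_property_1} applied to $w$ then gives
\begin{equation*}
|v - R_h v|_{L^2}^2 = (\nabla(v - R_h v), \nabla(w - I_h w)) \leq C h^{s-1} |v|_{H^s} \cdot C h |w|_{H^2},
\end{equation*}
and canceling one factor of $|v - R_h v|_{L^2}$ produces $|v - R_h v|_{L^2} \leq C h^s |v|_{H^s}$. Raising to the $p$-th power and integrating (or taking the essential supremum) in time with $v = \partial_t^k u$ delivers the claimed Bochner estimate.

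The main subtlety, rather than a true obstacle, is the Aubin--Nitsche step: it needs $H^2$ elliptic regularity of $-\Delta$ with homogeneous Dirichlet data, which is available here because $\Omega$ is assumed convex polygonal. The commutation of $R_h$ with $\partial_t^k$ is essentially bookkeeping once one observes that $R_h$ acts only in the spatial variable; no hyperbolic structure of \eqref{Westervelt_lin_continuous} enters the proof of this lemma at all.
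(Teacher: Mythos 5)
Your proposal is correct, and it is the standard argument (Lax--Milgram pointwise in time, commutation of $R_h$ with $\partial_t^k$ by uniqueness, C\'ea plus Aubin--Nitsche duality using convexity of $\Omega$) underlying the cited result: the paper itself does not prove Lemma~\ref{Lemma_projector_I} but imports it from \cite[Lemma 2.1]{baker1976error}, whose proof proceeds exactly along these lines. The only tacit point worth flagging is that the Ritz projection requires $\partial_t^k u(\cdot,t) \in H_0^1(\Omega)$ a.e.\ in $t$, which is implicit in the paper's use of $\dot{H}^s(\Omega)$.
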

\noindent We also need a bound on the gradient of $\varrho$ to be able to later derive $H^1$ bounds for $u-u_h$.
\begin{lemma}\label{Lemma_projector_II}  Let $u$ be the solution of \eqref{Westervelt_lin_continuous} such that $u \in L^p(0,T; H^s(\Omega))$, where $1 \leq s \leq 2$, $1 \leq p \leq  \infty$. Then it holds
\begin{equation}
\begin{aligned}
\|\nabla(u-R_h u)\|_{L^p L^2}\leq&\, Ch^{s-1}\|u\|_{L^p H^s}, \\
\end{aligned}
\end{equation}
\end{lemma}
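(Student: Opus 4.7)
The plan is to exploit the Galerkin orthogonality built into the Ritz projection and then invoke the approximation property \eqref{approx_property_1} of the finite element space to transfer the bound onto the best interpolant in $S_h$. This is a standard Céa-type argument adapted for the $H^1$-seminorm, and since the Ritz projection is defined by orthogonality with respect to $(\nabla \cdot, \nabla \cdot)$, it is automatically quasi-optimal in that seminorm.

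\textbf{Key steps:} First, for each $t \in [0,T]$ (or rather, for a.e. $t$), I would write
\[
|\nabla(u - R_h u)|_{L^2}^2 = (\nabla(u - R_h u), \nabla(u - R_h u)) = (\nabla(u - R_h u), \nabla(u - \mathcal{U}))
\]
for any $\mathcal{U} \in S_h$, using the defining identity \eqref{eq_projection} with $\phi = \mathcal{U} - R_h u \in S_h$ to subtract off the term involving $\mathcal{U} - R_h u$. Second, I would apply the Cauchy--Schwarz inequality, which yields
\[
|\nabla(u - R_h u)|_{L^2} \leq |\nabla(u - \mathcal{U})|_{L^2}.
\]
Third, I would choose $\mathcal{U} \in S_h$ to be the approximation from \eqref{approx_property_1} so that $|\nabla(u - \mathcal{U})|_{L^2} \leq C h^{s-1} |u|_{H^s}$; this is legitimate because $u$ solves \eqref{Westervelt_lin_continuous} with homogeneous Dirichlet boundary data and $H^s$ spatial regularity, so $u(t) \in \dot{H}^s(\Omega)$ for a.e. $t$. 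Fourth, taking $L^p$-norms in time (both sides being non-negative functions of $t$) produces
\[
\|\nabla(u - R_h u)\|_{L^p L^2} \leq C h^{s-1} \|u\|_{L^p H^s},
\]
as claimed.

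\textbf{Anticipated difficulties:} I do not expect any substantial obstacle here; the argument is essentially a pointwise-in-time Céa lemma combined with a Bochner-norm application. The only subtle points are (i) verifying that the trace $u(t) \in \dot{H}^s(\Omega)$ is available a.e.\ in time so that \eqref{approx_property_1} applies, and (ii) ensuring measurability of $t \mapsto |\nabla(u - R_h u)(t)|_{L^2}$, which follows from Lemma~\ref{Lemma_projector_I} since $R_h u \in L^p(0,T; S_h)$ and hence $\nabla R_h u \in L^p(0,T; L^2(\Omega))$. Both of these are routine consequences of the setup already established.
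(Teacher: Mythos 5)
Your argument is correct and coincides with the paper's own proof, which likewise invokes the Galerkin orthogonality of $u-R_h u$ a.e.\ in time, C\'ea's lemma (your Cauchy--Schwarz step in the $H^1$-seminorm), and the approximation property \eqref{approx_property_1}, followed by taking the $L^p$ norm in time. Nothing further is needed.
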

\begin{proof}
The estimate follows directly from the Galerkin orthogonality of $u-R_h u$ a.e. in time, C\'ea's lemma and the approximation property \eqref{approx_property_1}. 
\end{proof}
We note that analogous bounds can be obtained for $\varrho_{t}$ and $\varrho_{tt}$ by differentiating \eqref{eq_projection} once and twice with respect to time.
\subsection{Bounds for \mathversion{bold}$\theta=R_h u-u_h$} Since we are able to estimate $\varrho$, we now focus on deriving two a priori bounds for $\theta$.  \\
\indent At this point, we choose the approximate initial data as Ritz projections of $u_0$, $u_1$ in order to have $\theta(0)=\theta_t(0)=0$. 
\begin{proposition}\label{Prop_lin_discrete}   Let $c^2$, $b>0$. Let $u$ be the solution of \eqref{Westervelt_lin_continuous} which satisfies
\begin{align*}
u \in L^\infty(0,T; \dot{H}^s(\Omega)), \quad u_t, \ u_{tt} \in L^2(0,T; \dot{H}^s(\Omega)),
\end{align*}
where $1 \leq s \leq 2$. Let Assumption~\ref{assumption_2} hold  and $\alpha_{h,t} \in L^\infty(0,T; L^3(\Omega))$. Furthermore, let $(u_{h,0}, u_{h,1})=(R_h u_0, R_h u_1)$. Then there exists a positive constant $C=C(\alpha_h, \beta_h, T)$ such that
\begin{equation} \label{lower_bound_theta}
\begin{aligned}
& \left\|\theta_{t}\right\|_{L^
\infty L^2}+ \|\nabla \theta\|_{L^
\infty L^2} + \left \|\nabla \theta_{t} \right\|_{L^2 L^2} \\[1mm]
\leq& \,\!\begin{multlined}[t] C \left \{ h^{s} \, \|u_t\|_{L^2 H^s}+h^{s} \, \|u_{tt}\|_{L^2 H^s}
+\|f-f_h\|_{L^2L^2} \right.\\[1.5mm]
\left.+\|\alpha-\alpha_h\|_{L^\infty L^2}\left\|u_{tt}\right\|_{L^2 L^3}+\|\beta-\beta_h\|_{L^\infty L^2}\left\|u_{t}\right\|_{L^2 L^3} \right\}. \end{multlined}
\end{aligned}
\end{equation}
\end{proposition}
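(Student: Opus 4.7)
The plan is to follow the ``PDE for $\theta$'' strategy, deriving a discrete wave equation of exactly the shape \eqref{weak_form_discrete} but now driven by a residual that encodes both the Ritz-projection error and the coefficient/source approximation error, and then closing the estimate by a single energy test. First I would subtract the semi-discrete weak form \eqref{weak_form_discrete} from the continuous weak form of \eqref{Westervelt_lin_continuous} tested against an arbitrary $\phi\in S_h\subset H_0^1(\Omega)$. The $c^2$ and $b$ gradient terms collapse onto $\nabla\theta$ and $\nabla\theta_t$ by the defining identity \eqref{eq_projection} of the Ritz projection. For the variable-coefficient pieces I would add and subtract $\alpha_h u_{tt}$, $\alpha_h R_h u_{tt}$, and the analogues for $\beta$, so that
\begin{equation*}
\alpha u_{tt}-\alpha_h u_{h,tt}=(\alpha-\alpha_h)u_{tt}+\alpha_h\varrho_{tt}+\alpha_h\theta_{tt},\quad \beta u_t-\beta_h u_{h,t}=(\beta-\beta_h)u_t+\beta_h\varrho_t+\beta_h\theta_t.
\end{equation*}
This yields the equation
\begin{equation*}
(\alpha_h\theta_{tt},\phi)+c^2(\nabla\theta,\nabla\phi)+b(\nabla\theta_t,\nabla\phi)+(\beta_h\theta_t,\phi)=(F,\phi),\quad \phi\in S_h,
\end{equation*}
with $F=(f-f_h)-(\alpha-\alpha_h)u_{tt}-\alpha_h\varrho_{tt}-(\beta-\beta_h)u_t-\beta_h\varrho_t$ and vanishing initial data (since the choice $(u_{h,0},u_{h,1})=(R_hu_0,R_hu_1)$ forces $\theta(0)=\theta_t(0)=0$).

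Because the left-hand side of \eqref{lower_bound_theta} does \emph{not} contain $\|\theta_{tt}\|_{L^2L^2}$, a single energy test with $\phi=\theta_t\in S_h$ will suffice, in contrast to Theorem~\ref{thm:existence_linear} where $\phi=\theta_{tt}$ was also needed. Integration by parts in time on the $(\alpha_h\theta_{tt},\theta_t)$ term produces $\frac12\frac{d}{dt}\!\int\alpha_h\theta_t^2\,dx-\frac12\!\int\alpha_{h,t}\theta_t^2\,dx$, so after integrating over $(0,t)$ and using $\alpha_h\geq\alpha_0>0$ the left side dominates $\frac{\alpha_0}{2}|\theta_t(t)|_{L^2}^2+\frac{c^2}{2}|\nabla\theta(t)|_{L^2}^2+b\|\nabla\theta_t\|_{L^2(0,t;L^2)}^2$, which are precisely the three norms in \eqref{lower_bound_theta}. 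I would bound the source $F$ piece by piece with the Hölder triples listed in the excerpt. For the coefficient-error pieces I use $(2,3,6)$ together with the embedding $H_0^1\hookrightarrow L^6$ from \eqref{embed_constants}, so that
\begin{equation*}
|((\alpha-\alpha_h)u_{tt},\theta_t)|\leq|\alpha-\alpha_h|_{L^2}|u_{tt}|_{L^3}|\theta_t|_{L^6}\leq C\,|\alpha-\alpha_h|_{L^2}|u_{tt}|_{L^3}|\nabla\theta_t|_{L^2},
\end{equation*}
and similarly for the $\beta$-analogue; Cauchy--Schwarz in time yields the desired products $\|\alpha-\alpha_h\|_{L^\infty L^2}\|u_{tt}\|_{L^2L^3}$ and $\|\beta-\beta_h\|_{L^\infty L^2}\|u_t\|_{L^2L^3}$. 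For the Ritz pieces $(\alpha_h\varrho_{tt},\theta_t)$ and $(\beta_h\varrho_t,\theta_t)$ I use the $(3,2,6)$ triple and then invoke Lemma~\ref{Lemma_projector_I} differentiated in time to convert $|\varrho_{tt}|_{L^2}$ and $|\varrho_t|_{L^2}$ into $Ch^s|u_{tt}|_{H^s}$ and $Ch^s|u_t|_{H^s}$. The term $(f-f_h,\theta_t)$ is handled by plain Cauchy--Schwarz. The leftover lower-order contributions from the energy identity, $\int\alpha_{h,t}\theta_t^2\,dx$ and $(\beta_h\theta_t,\theta_t)$, I estimate with $(3,6,2)$ to produce $C|\alpha_{h,t}|_{L^3}|\nabla\theta_t|_{L^2}|\theta_t|_{L^2}$ and the analogue for $\beta_h$. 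A single application of Young's inequality \eqref{Young_inequality} splits every $|\nabla\theta_t|_{L^2}$ factor into an $\varepsilon$-small $|\nabla\theta_t|_{L^2}^2$ (absorbed into the $b$-term on the left) and a data term, and the surviving $|\theta_t|_{L^2}^2$ integrand is consumed by Gronwall's inequality \eqref{Gronwall_inequality}, producing the exponential constant of the form \eqref{Thm1_const}.

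The hard part will be the Hölder bookkeeping in the source estimates. One cannot place the coefficient differences in $L^\infty$ without strengthening the hypotheses, which is exactly why the triple $(2,3,6)$ is forced and why the right-hand side of \eqref{lower_bound_theta} naturally carries the mixed norm $\|u_{tt}\|_{L^2L^3}$ rather than $\|u_{tt}\|_{L^2L^2}$. This choice routes every residual through $|\nabla\theta_t|_{L^2}$, and it is only because the strong damping satisfies $b>0$ that the Young splittings can be absorbed on the left; the argument collapses without strong damping. A secondary subtlety is to verify that the $\alpha_{h,t}\theta_t^2$ and $\beta_h\theta_t^2$ terms, after Young, leave an integrand proportional only to $|\theta_t|_{L^2}^2$, so that a scalar Gronwall inequality closes the estimate without requiring a second test with $\phi=\theta_{tt}$.
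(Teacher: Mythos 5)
Your proposal is correct and follows essentially the same route as the paper: the same decomposition $\alpha u_{tt}-\alpha_h u_{h,tt}=(\alpha-\alpha_h)u_{tt}+\alpha_h\varrho_{tt}+\alpha_h\theta_{tt}$ leading to the residual equation for $\theta$, a single test with $\phi=\theta_t$ (exploiting $\theta(0)=\theta_t(0)=0$), the same H\"older triples placing the coefficient errors in $L^2$, $u_{tt}$ and $u_t$ in $L^3$, and $\theta_t$ in $L^6\hookleftarrow H_0^1$, followed by Young's inequality to absorb $\|\nabla\theta_t\|_{L^2L^2}^2$ into the $b$-term, Gronwall on $\|\theta_t\|_{L^2L^2}^2$, and Lemma~\ref{Lemma_projector_I} for the Ritz terms. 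Your remarks on why the $L^2$--$L^3$--$L^6$ splitting is forced and why $b>0$ is essential match the paper's own discussion.
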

\begin{proof}
\indent The main idea of the proof is to see $\theta$ as a solution of a wave equation with variable coefficients and a source term and then test that equation with a suitable test function. Compared to the similar results for solutions of linear wave equations with constant coefficients~\cite{baker1976error, larsson1991finite}, we also take into account the error of the varying coefficients. \\
\indent By subtracting the weak forms for $u$ and $u_h$ and recalling the definition of $R_h u$, we find that $\theta$ solves
\begin{equation} \label{equation_theta}
\begin{aligned}
& \left( \alpha_h \theta_{tt} , \phi \right) + c^2(\nabla \theta, \nabla \phi)+b\left(\nabla \theta_{t}, \nabla \phi\right)+\left(\beta_h \theta_{t}, \phi\right)\\
=& \, -\left(\alpha_h\varrho_{tt}, \phi \right)-\left(\beta_h \varrho_{t}, \phi \right)+(f-f_h, \phi)\\
&-\left((\alpha-\alpha_h)u_{tt}, \phi \right)-\left((\beta-\beta_h) u_{t}, \phi \right),
\end{aligned}
\end{equation}
for all $\phi \in S_h$ a.e. in time. We next want to test \eqref{equation_theta} with $\displaystyle \phi=\theta_{t}$, noting that $\theta_t \in S_h$ a.e. in time. To get optimal error estimates for $u_h$, it is important to only employ the $L^2$ spatial norm of $\varrho_{t}$ and $\varrho_{tt}$ in our estimates. We also have to pay special attention to estimating the last two terms on the right-hand side. Having in mind the nonlinear problem where we will know that
\[ \|\alpha -\alpha _h\|_{L^\infty L^2} +\|\beta-\beta_h\|_{L^\infty L^2}+h\|\nabla (\alpha -\alpha _h)\|_{L^\infty L^2}+h\|\nabla(\beta-\beta_h) \|_{L^\infty L^2}\leq C h^s,\]
we should only have the terms $\alpha-\alpha_h$ and $\beta-\beta_h$ in the $L^2$ spatial norm in our estimates to ensure optimal error rates for the nonlinear Westervelt equation. Testing \eqref{equation_theta} with $\theta_{t}$, integrating over $(0, t)$, and employing H\"older's inequality then results in
\begin{equation*}
\begin{aligned}
    & \frac{\alpha_0}{2} \left|\theta_{t}(t)\right|^2_{L^2}+\frac{c^2}{2} |\nabla \theta(t)|^2_{L^2} +b \left \|\nabla \theta_{t} \right\|^2_{L^2 L^2}  \\
\leq& \, \|\beta_h\|_{L^\infty L^3}\left(\left\|\theta_{t}\right\|_{L^2 L^2}+\left\|\varrho_{t}\right\|_{L^2 L^2} \right)\left\|\theta_{t}\right\|_{L^2 L^6}+\|\alpha_h\|_{L^\infty L^3} \left\|\varrho_{tt}\right\|_{L^2 L^2}\left\|\theta_{t}\right\|_{L^2 L^6} \\
&+\frac{1}{2}\left\|\alpha_{h,t} \right\|_{L^\infty L^3}\left \| \theta_{t} \right\|_{L^2 L^2}\left \| \theta_{t} \right\|_{L^2 L^6} +\|f-f_h\|_{L^2L^2}\left\|\theta_{t}\right\|_{L^2 L^2} \\
&+\left(\|\alpha-\alpha_h\|_{L^\infty L^2}\left\|u_{tt}\right\|_{L^2 L^3} \right.
\left.+\|\beta-\beta_h\|_{L^\infty L^2}\left\|u_{t}\right\|_{L^2 L^3}\right)\left\|\theta_{t}\right\|_{L^2 L^6},
\end{aligned}
\end{equation*}
for a.e. $t \in [0,T]$. Above, we have used the identity
\begin{align*}
 \displaystyle   \int_0^t \int_\Omega \alpha_h \theta_{tt} \theta_{t} \, \textup{d}x \, \textup{d}s=\frac12 \left (\int_{\Omega}\alpha_h(s)\left.\left |\theta_{t}(s)\right|^2 \, \textup{d}x \right ) \, \right \rvert_0^t-\frac12 \int_0^t \int_\Omega  \alpha_{h,t} \left|\theta_{t}\right|^2 \, \textup{d}x \, \textup{d}s,
\end{align*}
 and the fact that $\theta(0)=\theta_t(0)=0$. By further employing the embedding results \eqref{embed_constants} and Young's $\varepsilon$-inequality \eqref{Young_inequality} with $\varepsilon \in \{b/8, 1 \}$ to handle the product terms, we get
\begin{equation} \label{lower_est_theta}
\begin{aligned}
& \frac{\alpha_0}{2} \left|\theta_{t}(t)\right|^2_{L^2}+\frac{c^2}{2}  |\nabla \theta(t)|^2_{L^2} +\frac{b}{2} \left \|\nabla \theta_{t} \right\|^2_{L^2 L^2} \\
\leq& \, \left\|\theta_{t}\right\|^2_{L^2 L^2}+\frac{4}{b} C_{H_0^1, L^6}^2 \|\beta_h\|^2_{L^\infty L^3}\left(\left\|\theta_{t}\right\|^2_{L^2 L^2}+\left\|\varrho_{t}\right\|^2_{L^2 L^2} \right) +\frac{1}{4 } \|f-f_h\|^2_{L^2L^2} \\
&+\frac{1}{2b}C_{H_0^1, L^6}^2\left\|\alpha_{h,t} \right\|^2_{L^\infty L^3}\left \| \theta_{t} \right\|^2_{L^2 L^2}+\frac{2}{b}C_{H_0^1, L^6}^2\|\alpha_h\|^2_{L^\infty L^3} \left\|\varrho_{tt}\right\|^2_{L^2 L^2}\\
&+\frac{4}{b}C_{H_0^1, L^6}^2\left(\|\alpha-\alpha_h\|^2_{L^\infty L^2}\left\|u_{tt}\right \|^2_{L^2 L^3}+\|\beta-\beta_h\|^2_{L^\infty L^2}\left\|u_{t}\right\|^2_{L^2 L^3}\right).
\end{aligned}
\end{equation}
Applying Gronwall's inequality \eqref{Gronwall_inequality} to the above estimate and taking the essential supremum over $(0,T)$ then leads to 
\begin{equation*} 
\begin{aligned}
& \left\|\theta_{t}\right\|^2_{L^
\infty L^2}+ \|\nabla \theta\|^2_{L^
\infty L^2} + \left \|\nabla \theta_{t} \right\|^2_{L^2 L^2} \\
\leq& \,\overline{C}(\alpha_h, \beta_h, T)\!\begin{multlined}[t] \left\{ \|\beta_h\|^2_{L^\infty L^3}\left\|\varrho_{t}\right\|^2_{L^2 L^2} 
+ \|\alpha_h\|^2_{L^\infty L^3} \left\|\varrho_{tt}\right\|^2_{L^2 L^2}+\|f-f_h\|^2_{L^2L^2} \right.\\
\left.+\|\alpha-\alpha_h\|^2_{L^\infty L^2}\left \|u_{tt} \right\|^2_{L^2 L^3}+\|\beta-\beta_h\|^2_{L^\infty L^2}\left\|u_{t}\right\|^2_{L^2 L^3}\right \}, \end{multlined}
\end{aligned}
\end{equation*}
with $\overline{C}(\alpha_h, \beta_h, T)=C_3 \,\textup{exp}\left(C_4 \left(\left\|\alpha_{h,t} \right\|^2_{L^\infty L^3}+ \|\beta_h\|^2_{L^\infty L^3}+1\right)T \right)\,$. Thanks to the results on the elliptic projector stated in Lemma~\ref{Lemma_projector_I} which provide the upper bounds on $\|\varrho_t\|_{L^2L^2}$ and  $\|\varrho_{tt}\|_{L^2L^2}$, we then obtain the final bound \eqref{lower_bound_theta}, where the constant is given by
\begin{equation*}
\begin{aligned}
&C(\alpha_h, \beta_h, T) \\
=& \, C_5 \left(\|\alpha_{h}\|_{L^\infty L^3}+\|\beta_h\|_{L^\infty L^3} +1\right) \textup{exp}\left(C_6 \, \left(\left\|\alpha_{h,t} \right\|^2_{L^\infty L^3}+\|\beta_h\|^2_{L^\infty L^3}+1\right)T \right).
\end{aligned}
\end{equation*} 
\end{proof}
\begin{remark}
We note that if $\alpha_h \in C([0,T]; L^\infty (\Omega))$, the same error order can be obtained if we choose any $u_{1,h}$ such that $|u_1-u_{1,h}|_{L^2} \leq C h^s$. This is due to the fact that we would then have an additional term on the right-hand side of \eqref{lower_est_theta} that is of order $h^s$: $$|\alpha_h(0)|_{L^\infty}\left|\theta_{t}(0)\right|^2_{L^2} \leq C|\alpha_h(0)|_{L^\infty}(|u_1-u_{1,h}|^2_{L^2}+\|\varrho(0)\|^2_{L^2}).$$
\end{remark}
\indent To be able to later employ a fixed-point approach and derive an a priori estimate for the nonlinear model, we also need to bound $\|\theta_{tt}\|_{L^2L^2}$. 
\begin{proposition} \label{Prop:theta_higher} Let $c^2$, $b>0$ and let $u$ to be the solution of \eqref{Westervelt_lin_continuous} that satisfies 
\begin{equation} \label{condition_u_lin}
\begin{aligned}
&u  \in L^\infty(0,T; \dot{H}^s(\Omega)), \quad u_t \in L^2(0,T; L^\infty(\Omega)) \cap L^\infty(0,T; \dot{H}^s(\Omega)), \\
& u_{tt} \in L^2(0,T; L^\infty(\Omega) \cap \dot{H}^s(\Omega)),
\end{aligned}
\end{equation}
where $s \in [1,2]$. Let Assumption~\ref{assumption_2} hold and let
$\beta_h \in L^2(0,T; L^\infty(\Omega))$ and $\alpha_{h,t} \in L^\infty(0,T; L^3(\Omega))$. Assume that $(u_{h, 0}, u_{h, 1})=(R_h u_0, R_h u_1)$. Then there exists a positive constant $C=C(\alpha_h, \beta_h, T)$ such that
\begin{equation} \label{higher_est_theta}
\begin{aligned}
& \|\nabla \theta\|_{L^
\infty L^2}+\left\|\theta_t\right\|_{L^
\infty L^2} + \left \|\nabla \theta_t \right\|_{L^\infty L^2} +\left\|\theta_{tt}\right\|_{L^2 L^2} \\[0.5mm]
\leq& C\,\!\begin{multlined}[t] \{h^{s}  \left \|u_t \right \|_{L^\infty H^s}+h^{s} \|u_{tt}\|_{L^2 H^s}+\|f-f_h\|_{L^2L^2}\\[1mm]
+\left\|u_{tt}\right\|_{L^2 L^\infty}\|\alpha-\alpha_h\|_{L^\infty L^2}  +\left\|u_t\right\|_{L^2 L^\infty}\|\beta-\beta_h\|_{L^\infty L^2}\}.\end{multlined}
\end{aligned}
\end{equation}
\end{proposition}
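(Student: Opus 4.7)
The plan is to mimic the structure of Proposition \ref{Prop_lin_discrete}, but this time test the error equation \eqref{equation_theta} with $\phi=\theta_{tt}\in S_h$ rather than $\phi=\theta_t$. Integrating over $(0,t)$, the key LHS contributions are $\alpha_0\|\theta_{tt}\|^2_{L^2L^2}$ from the coefficient $\alpha_h\ge\alpha_0$, and $\tfrac{b}{2}|\nabla\theta_t(t)|^2_{L^2}$ from the damping term (using $\theta_t(0)=0$ since the initial approximations are chosen as Ritz projections). The stiffness term $c^2(\nabla\theta,\nabla\theta_{tt})$ is handled by integration by parts in time exactly as in \eqref{tricky_term}, producing a boundary contribution $-c^2(\nabla\theta(t),\nabla\theta_t(t))$ and an interior term $c^2\|\nabla\theta_t\|^2_{L^2L^2}$; the boundary term is then split via Young's $\varepsilon$-inequality with $\varepsilon=b/4$ into $\tfrac{c^4}{b}|\nabla\theta(t)|^2_{L^2}+\tfrac{b}{4}|\nabla\theta_t(t)|^2_{L^2}$, the latter being absorbed by the damping contribution.

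For the right-hand side of \eqref{equation_theta}, each term is paired with $\theta_{tt}$ using H\"older's inequality in the $(\infty,2,2)$ pattern to produce factors of $|\theta_{tt}|_{L^2}$ that can be absorbed by the $\alpha_0\|\theta_{tt}\|^2_{L^2L^2}$ on the left via Young's inequality. Specifically: $(\alpha_h\varrho_{tt},\theta_{tt})$ is bounded by $\|\alpha_h\|_{L^\infty L^\infty}\|\varrho_{tt}\|_{L^2L^2}\|\theta_{tt}\|_{L^2L^2}$; the term $(\beta_h\varrho_t,\theta_{tt})$ is bounded by $\|\beta_h\|_{L^2L^\infty}\|\varrho_t\|_{L^\infty L^2}\|\theta_{tt}\|_{L^2L^2}$, which explains the strengthened assumption $\beta_h\in L^2(L^\infty)$ and the appearance of $\|u_t\|_{L^\infty H^s}$ via Lemma \ref{Lemma_projector_I}; and $((\alpha-\alpha_h)u_{tt},\theta_{tt})$, $((\beta-\beta_h)u_t,\theta_{tt})$ are bounded by $\|\alpha-\alpha_h\|_{L^\infty L^2}\|u_{tt}\|_{L^2L^\infty}\|\theta_{tt}\|_{L^2L^2}$ and its $\beta$-analogue, which explains the regularity requirement $u_t,u_{tt}\in L^2(L^\infty)$ in \eqref{condition_u_lin}. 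The lower-order term $(\beta_h\theta_t,\theta_{tt})$ is handled similarly with Young's inequality, leaving a Gronwall-type contribution in $\|\theta_t\|^2_{L^\infty L^2}$.

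After these steps, the inequality reads, schematically,
\begin{equation*}
\alpha_0\|\theta_{tt}\|^2_{L^2L^2}+\tfrac{b}{4}|\nabla\theta_t(t)|^2_{L^2}
\le \tfrac{c^4}{b}|\nabla\theta(t)|^2_{L^2}+C\|\theta_t\|^2_{L^\infty L^2}+\text{(data terms)},
\end{equation*}
where the data terms match those promised in \eqref{higher_est_theta} after invoking Lemma \ref{Lemma_projector_I} for $\|\varrho_t\|_{L^\infty L^2}$ and $\|\varrho_{tt}\|_{L^2L^2}$. The remaining obstacle is the two terms $\tfrac{c^4}{b}|\nabla\theta(t)|^2_{L^2}$ and $C\|\theta_t\|^2_{L^\infty L^2}$, which cannot be absorbed into the LHS. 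However, both are already bounded by the right-hand side of \eqref{lower_bound_theta} from Proposition \ref{Prop_lin_discrete}, whose assumptions are strictly weaker than those of the present proposition. Substituting this a priori bound, taking the essential supremum over $t\in(0,T)$, and applying Gronwall's inequality \eqref{Gronwall_inequality} to absorb the residual $\|\theta_t\|^2_{L^\infty L^2}$ contribution yields \eqref{higher_est_theta}.

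The main obstacle, as in Theorem \ref{thm:existence_linear}, is the stiffness term $c^2(\nabla\theta,\nabla\theta_{tt})$: because $\theta_{tt}$ is only $L^2$ in time, one cannot estimate it pointwise and must instead pay for this via integration by parts in time combined with the damping. This in turn forces the use of the weaker $\|\nabla\theta\|_{L^\infty L^2}$ bound from Proposition \ref{Prop_lin_discrete}, so the two propositions must be chained rather than proved independently. A secondary subtlety is the careful choice of H\"older triples to ensure that only $\|\alpha-\alpha_h\|_{L^\infty L^2}$ and $\|\beta-\beta_h\|_{L^\infty L^2}$ (never their gradients) appear in the coefficient-error contributions, so that in the nonlinear analysis of Section \ref{Section:FEMWestervelt} the optimal rate $h^s$ is preserved.
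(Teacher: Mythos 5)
Your proposal is correct and follows essentially the same route as the paper: test \eqref{equation_theta} with $\theta_{tt}$, integrate the stiffness term by parts in time as in \eqref{tricky_term}, choose the H\"older pairings so that only $\|\alpha-\alpha_h\|_{L^\infty L^2}$ and $\|\beta-\beta_h\|_{L^\infty L^2}$ appear (which is exactly why $\beta_h\in L^2L^\infty$ and the $L^2L^\infty$ regularity of $u_t,u_{tt}$ are assumed), and control the leftover $\tfrac{c^4}{b}|\nabla\theta(t)|^2_{L^2}$ via the lower-order estimate. The only cosmetic difference is that the paper adds the pre-Gronwall inequality \eqref{lower_est_theta} scaled by $\lambda$ with $\lambda c^2/2>c^4/b$ and applies Gronwall to the sum, whereas you invoke the already-proved conclusion of Proposition~\ref{Prop_lin_discrete}; both close the argument.
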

\begin{proof}
\indent To obtain the higher order estimate, we additionally test \eqref{equation_theta} with $\displaystyle \phi=\theta_{tt}$. After integrating over $(0,t)$ and recalling that $\theta(0)=\theta_t(0)=0$, we find 
\begin{equation} \label{est_theta_tt}
\begin{aligned}
&(\alpha_0-6\eps) \left\|\theta_{tt}\right\|^2_{ L^2 L^2}+\frac{b}{4}\left|\nabla \theta_t (t)\right |^2_{L^2} \smallskip \\
\leq& \,\frac{1}{4\eps} \| \beta_h\|^2_{L^\infty L^3}C^2_{H_0^1, L^6} \left\|\nabla \theta_t\right\|^2_{L^2 L^2}+\frac{1}{4 \eps}\|\alpha_h\|^2_{L^\infty L^\infty}\left\|\varrho_{tt}\right\|^2_{L^2L^2},  \\
&+\frac{1}{4 \varepsilon}\|\beta_h\|^2_{L^2 L^\infty}\left \|\varrho_t\right\|^2_{L^\infty L^2}+\frac{1}{4 \eps}\|\alpha-\alpha_h\|^2_{L^\infty L^2}\left \|u_{tt}\right\|^2_{L^2 L^\infty} \\
&+\frac{1}{4 \eps}\|\beta-\beta_h\|^2_{L^\infty L^2} \left\|u_t\right\|^2_{L^2 L^\infty}+\frac{1}{4 \eps}\|f-f_h\|^2_{L^2L^2} + \frac{c^4}{b} |\nabla \theta(t)|^2_{L^2}\\
&+c^2\left\|\nabla \theta_t\right\|^2_{L^2L^2},
\end{aligned}
\end{equation}
for $t \in [0,T]$. Above we have estimated $\displaystyle c^2\int_0^{t}\int_{\Omega} \nabla \theta  \cdot \nabla \theta_{tt}\, \textup{d}x$ in the same manner as \eqref{tricky_term}. Since $\theta(0)=0$, we can further infer that
\begin{align*}
   |\nabla \theta(t)|_{L^2}= \left|\int_0^t \nabla \theta_t(s) \, \textup{d}s \right|_{L^2}\leq  \int_0^t |\nabla \theta_t(s)|_{L^2} \, \textup{d}s \leq \sqrt{T}\|\nabla\theta_t\|_{L^2L^2}, \ t \in[0,T].
\end{align*}
Note that, compared to Proposition~\ref{Prop_lin_discrete}, we had to introduce additional assumptions \eqref{condition_u_lin} on the $L^\infty$ regularity of $u_t$ and $u_{tt}$. This is again due to the fact that we do not want to have higher than $L^2$ spatial norms of $\alpha-\alpha_h$ and $\beta-\beta_h$ on the right-hand side of \eqref{est_theta_tt}.\\
\indent We choose $\varepsilon < \alpha_0/6$ and add \eqref{est_theta_tt} to  estimate \eqref{lower_est_theta} multiplied by $\lambda>0$ such that $\lambda c^2/2>c^4/b$. We then apply Gronwall's inequality to the resulting estimate to obtain 
\begin{equation*}
\begin{aligned}
& \left\|\theta_{tt}\right\|^2_{L^2 L^2}+\left\|\theta_t\right\|^2_{L^
\infty H^1}+ \left \|\nabla \theta_t \right\|^2_{L^2 L^2} +\|\nabla \theta\|^2_{L^\infty L^2}\\
\leq& \, \!\begin{multlined}[t] C_7 \, \textup{exp}\left(C_8\, \left(\left\|\alpha_{h, t} \right\|^2_{L^\infty L^3}+ \|\beta_h\|^2_{L^\infty L^3}+T+1\right)T \right)
 \,\left\{\|\beta_h\|^2_{L^2 L^\infty}\left\|\varrho_t\right\|^2_{L^\infty L^2} 
\right. \\
+ \|\beta_h\|^2_{L^\infty L^3}\left\|\varrho_t\right\|^2_{L^2 L^2}
+\|\alpha_h\|^2_{L^\infty L^\infty} \left\|\varrho_{tt}\right\|^2_{L^2 L^2} \vphantom{\left(\left\|u_{tt}\right\|^2_{L^\infty L^4}+\left\|u_{tt}\right\|^2_{L^\infty L^\infty}\right)}
+\left\|u_{tt}\right\|^2_{L^2 L^\infty}\|\alpha-\alpha_h\|^2_{L^\infty L^2}
\\
\left.+\left\|u_t\right\|^2_{L^2 L^\infty}\|\beta-\beta_h\|^2_{L^\infty L^2}+ \|f-f_h\|^2_{L^2L^2} \vphantom{\left(\left\|u_{tt}\right\|^2_{L^\infty L^4}+\left\|u_{tt}\right\|^2_{L^2 L^\infty}\right)} \right \}, \end{multlined}
\end{aligned}
\end{equation*}
where we have also used that $|v|_{L^3} \leq C_{L^\infty, L^3}|v|_{L^\infty}$, for $v \in L^\infty(\Omega)$. Employing the bounds on $\|\varrho_t\|_{L^\infty L^2}$ and $\|\varrho_{tt}\|_{L^2L^2}$ then leads to the estimate \eqref{higher_est_theta}, where the constant is given by
\begin{equation*}
\begin{aligned}
&C(\alpha_h, \beta_h, T) \\
=& \, C_9 \!\begin{multlined}[t] \left(\|\alpha_h\|_{L^\infty L^\infty}+\|\beta_h\|_{L^\infty L^3}+\|\beta_h\|_{L^2 L^\infty}+1\right)\\[-1.8mm]
\times \textup{exp}\left(C_{10} \left(\left\|\alpha_{h, t} \right\|^2_{L^\infty L^3}+\|\beta_h\|^2_{L^\infty L^3}+T+1\right)T \right). \end{multlined}
\end{aligned}
\end{equation*}
\end{proof}
\subsection{A priori estimate for the linear equation} We can now state the a priori estimate for the linearized Westervelt equation with variable coefficients.
\begin{theorem}\label{Thm:LinWestervelt} Let the assumptions of Proposition~\ref{Prop:theta_higher} hold. Then the following a priori estimate is satisfied
\begin{equation} \label{HigerAPriori_lin}
\begin{aligned}
& \|u-u_h\|_{L^\infty L^2}+\left\|u_t-u_{h,t} \right\|_{L^
\infty L^2}+\left\|u_{tt}-u_{h,tt} \right\|_{L^
2 L^2}\\[1mm]
&+h\|\nabla (u-u_h)\|_{L^\infty L^2}+ h\left \|\nabla (u_{t}-u_{h,t}) \right\|_{L^\infty L^2}  \\[1mm]
\leq& \,C(\alpha_h, \beta_h, T)\,\!\begin{multlined}[t] \left\{ h^{s} \|u\|_{L^\infty H^s}+h^{s}\|u_t\|_{L^\infty H^s}+h^{s}\|u_{tt}\|_{L^2 H^s}+\|f-f_h\|_{L^2L^2}\right. \\[1.5mm]
+\left\|u_{tt}\right\|_{L^2 L^\infty}\|\alpha-\alpha_h\|_{L^\infty L^2}  
+\left\|u_{t}\right\|_{L^2 L^\infty}\|\beta-\beta_h\|_{L^\infty L^2} \},\end{multlined}
\end{aligned}
\end{equation}
where $u_h$ solves \eqref{weak_form_discrete}, \eqref{discrete_initial_data}. The constant appearing above is given by
\begin{equation} \label{final_constant}
\begin{aligned}
& C(\alpha_h, \beta_h, T)\\
=&\, C_{11}\!\begin{multlined}[t] \left\{\vphantom{\left\|\alpha_{h,t} \right\|^2_{L^\infty L^3}} \left(\|\alpha_h\|_{L^\infty L^\infty}+\|\beta_h\|_{L^2 L^\infty}+ \|\beta_h\|_{L^\infty L^3} +1\right) \,
 \right.\\
\left.\,\times \textup{exp}\, \Bigl(C_{12} \left(\left\|\alpha_{h,t} \right\|^2_{L^\infty L^3}+\|\beta_h\|^2_{L^\infty L^3}+T+1\right) T \Bigr)+1 \vphantom{\left\|\alpha_{h,t} \right\|^2_{L^\infty L^3}} \right\}.\end{multlined}
\end{aligned}
\end{equation}
\end{theorem}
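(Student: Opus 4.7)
The strategy is dictated by the decomposition already introduced: $u-u_h=\varrho+\theta$ where $\varrho=u-R_h u$ and $\theta=R_h u-u_h$. By the triangle inequality, every norm appearing on the left of \eqref{HigerAPriori_lin} splits into a $\varrho$-contribution and a $\theta$-contribution, and the task reduces to collecting the bounds already proved.

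For the $\varrho$-contribution I would apply Lemma~\ref{Lemma_projector_I} to $u$, and, after formally differentiating the Ritz projection identity \eqref{eq_projection} once and twice in time, to $u_t$ and $u_{tt}$. This yields
\begin{equation*}
\|\varrho\|_{L^\infty L^2}+\|\varrho_t\|_{L^\infty L^2}+\|\varrho_{tt}\|_{L^2 L^2}\leq C h^{s}\bigl(\|u\|_{L^\infty H^s}+\|u_t\|_{L^\infty H^s}+\|u_{tt}\|_{L^2 H^s}\bigr).
\end{equation*}
Lemma~\ref{Lemma_projector_II}, together with its time-differentiated counterpart, gives $\|\nabla\varrho\|_{L^\infty L^2}+\|\nabla\varrho_t\|_{L^\infty L^2}\leq C h^{s-1}(\|u\|_{L^\infty H^s}+\|u_t\|_{L^\infty H^s})$; the factors of $h$ placed in front of the gradient norms on the left of \eqref{HigerAPriori_lin} are precisely what is needed to absorb the one missing power of $h$ and restore the optimal rate $h^{s}$.

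For the $\theta$-contribution, the choice $(u_{h,0},u_{h,1})=(R_h u_0,R_h u_1)$ implies $\theta(0)=\theta_t(0)=0$, so the hypotheses of Proposition~\ref{Prop:theta_higher} are met. That proposition already bounds $\|\theta_t\|_{L^\infty L^2}$, $\|\nabla\theta\|_{L^\infty L^2}$, $\|\nabla\theta_t\|_{L^\infty L^2}$ and $\|\theta_{tt}\|_{L^2 L^2}$ in exactly the form asserted by \eqref{HigerAPriori_lin}, with a constant that matches \eqref{final_constant}; after multiplying by the factor $h$, the two gradient bounds are in fact of order $h^{s+1}$ and are harmlessly dominated by the $h^{s}$ rate.

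The one quantity not produced directly by Propositions~\ref{Prop_lin_discrete}--\ref{Prop:theta_higher} is $\|\theta\|_{L^\infty L^2}$, which is needed for $\|u-u_h\|_{L^\infty L^2}$. This is the only genuine step of the argument: using $\theta(0)=0$, I would write
\begin{equation*}
|\theta(t)|_{L^2}\leq \int_0^t |\theta_t(s)|_{L^2}\,\textup{d}s\leq T\,\|\theta_t\|_{L^\infty L^2}\quad\text{for all }t\in[0,T],
\end{equation*}
and then invoke the $\theta_t$ bound from Proposition~\ref{Prop:theta_higher}. Summing the $\varrho$- and $\theta$-estimates and taking the larger of the two constants yields \eqref{HigerAPriori_lin} with the form \eqref{final_constant}; the additive $+1$ in that constant reflects the $\varrho$-terms, whose prefactor does not depend on $\alpha_h$ or $\beta_h$. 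I do not expect any real obstacle: the proof is essentially bookkeeping, and the crucial subtlety was already resolved upstream, namely arranging Proposition~\ref{Prop:theta_higher} so that the coefficient errors $\alpha-\alpha_h$ and $\beta-\beta_h$ appear only in the $L^\infty L^2$ norm, paired respectively with $\|u_{tt}\|_{L^2 L^\infty}$ and $\|u_t\|_{L^2 L^\infty}$.
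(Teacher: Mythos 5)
Your proposal is correct and follows essentially the same route as the paper: split $u-u_h=\varrho+\theta$, bound the $\varrho$-terms via Lemma~\ref{Lemma_projector_I}, Lemma~\ref{Lemma_projector_II} and their time-differentiated versions (the paper explicitly invokes $\|\nabla \varrho_t\|_{L^\infty L^2} \leq C h^{s-1} \|u_t\|_{L^\infty H^s}$), and bound the $\theta$-terms via Proposition~\ref{Prop:theta_higher}. Your extra remark on recovering $\|\theta\|_{L^\infty L^2}$ from $\theta(0)=0$ (or, equivalently, from Poincar\'e's inequality applied to the $\|\nabla\theta\|_{L^\infty L^2}$ bound) correctly fills the one small detail the paper leaves implicit.
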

\begin{proof}
The estimate follows directly by splitting the difference $u-u_h$ into the $\theta$ and $\varrho$ terms, and then employing Proposition~\ref{Prop:theta_higher}, Lemma~\ref{Lemma_projector_I}, and Lemma~\ref{Lemma_projector_II}, and the fact that
$$\|\nabla \varrho_t\|_{L^\infty L^2} \leq C h^{s-1} \|u_t\|_{L^\infty H^s},$$ for some constant $C>0$ independent of $h$ and $u$.
\end{proof}
We note that Theorem~\ref{Thm:LinWestervelt} also includes, as a special case where $\alpha=1$, $\beta=f=0$, the a priori estimate for strongly damped linear wave equations with constant coefficients; this result corresponds to~\cite[Theorem 3.4]{larsson1991finite}. If we do not have to take the coefficient error into account, regularity conditions \eqref{condition_u_lin} for $u$ can be relaxed to 
\begin{equation*}
\begin{aligned} 
u, \ u_t \in L^\infty(0,T; \dot{H}^s(\Omega)), \quad  u_{tt} \in L^2(0,T; \dot{H}^s(\Omega)),
\end{aligned}
\end{equation*}
since we lose the last two terms in the estimate \eqref{HigerAPriori_lin}. 
\section{Finite element approximation of Westervelt's equation}\label{Section:FEMWestervelt}
We are now ready to study discretization in space of the initial-boundary value problem \eqref{Westervelt_continuous} for the Westervelt equation. We want to prove that it has a unique solution in a neighbourhood of $u$. To this end, we rely on the Banach fixed-point theorem. 
\begin{theorem}\label{thm:Westervelt} \textup{\textbf{[A priori error estimate]}} Let $c^2$, $b$, $k>0$, and $T>0$. Assume that the initial-boundary value problem \eqref{Westervelt_continuous_weak} for the Westervelt equation has a unique solution which satisfies
\begin{align*}
&u \in L^\infty(0,T; L^\infty(\Omega) \cap \dot{H}^s(\Omega)), \ u_t \in L^2(0,T; L^\infty(\Omega)) \cap L^\infty(0,T; \dot{H}^s(\Omega)),\\
& u_{tt} \in L^2(0,T; L^\infty(\Omega) \cap \dot{H}^s(\Omega)),
\end{align*}  where $\max \{1, d/2\} < s \leq 2$. Then for sufficiently small
\begin{align*}
 & m=\|u\|_{L^\infty L^\infty}, \\
 & M=\max \left\{\|u\|_{L^\infty H^s}, \left \|u_t \right \|_{L^\infty H^s}, \left\|u_t \right \|_{L^2 L^\infty}, \left\|u_{tt} \right \|_{L^2 H^s}, \left\|u_{tt} \right \|_{L^2 L^\infty}  \right\},
\end{align*}
and $h$, there exists a unique $u_h \in H^2(0,T; S_h)$ in a neighbourhood of $u$ which satisfies equation 
\begin{equation} \label{Westervelt_semi-discrete}
\begin{aligned}
\displaystyle \left( (1-2ku_h)u_{h, tt} , \phi \right) + c^2(\nabla u_h, \nabla \phi)+b\left(\nabla u_{h, t}, \nabla \phi \right)= 2k\left(u_{h, t}^2, \phi \right),
\end{aligned}
\end{equation}
for all $\phi \in S_h$ a.e. in time, and $\left(u_{h}(0), u_{h, t}(0)\right)=\left(R_h u_0, R_h u_1 \right).$ Furthermore, there exists a positive constant $C$ that depends on $m$, $M$, and $T$, but not on $h$, such that 
\begin{equation} \label{a_priori_bound}
\begin{multlined}[c] \|u-u_h\|_{L^\infty L^2}+\left\|u_t-u_{h,t} \right\|_{L^
\infty L^2}+\left\|u_{tt}-u_{h,tt} \right\|_{L^
2 L^2}\\[2mm]
+h\|\nabla (u-u_h)\|_{L^\infty L^2}+ h\left \|\nabla u_{t}-u_{h,t} \right\|_{L^\infty L^2}  \leq \, C h^{s}. \end{multlined}
\end{equation}
\end{theorem}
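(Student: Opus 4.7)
I would recast \eqref{Westervelt_semi-discrete} as a fixed-point problem for the linearized discrete operator studied in Sections~\ref{Section:LinVariableCoeff}--\ref{Section:LinVariableCoeff_APriori}. Given $v_h \in H^2(0,T;S_h)$, set
\begin{equation*}
\alpha_h = 1 - 2k v_h, \qquad \beta_h = -2k v_{h,t}, \qquad f_h = 0,
\end{equation*}
and define the map $\mathcal{T}: v_h \mapsto u_h$ where $u_h$ solves \eqref{weak_form_discrete} with initial data $(R_h u_0, R_h u_1)$; a fixed point of $\mathcal{T}$ is a solution of \eqref{Westervelt_semi-discrete}. I would look for it in
\begin{equation*}
\mathcal{B}_h = \bigl\{ v_h \in H^2(0,T;S_h) : (v_h(0),v_{h,t}(0)) = (R_h u_0, R_h u_1),\ \|v_h - R_h u\|_Y \leq C_0 h^s\bigr\},
\end{equation*}
where, matching the left-hand side of \eqref{a_priori_bound},
\begin{equation*}
\|w\|_Y := \|w\|_{L^\infty L^2} + \|w_t\|_{L^\infty L^2} + \|w_{tt}\|_{L^2L^2} + h\|\nabla w\|_{L^\infty L^2} + h\|\nabla w_t\|_{L^\infty L^2},
\end{equation*}
and $C_0$ is fixed at the end. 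Banach's theorem then requires two ingredients: (i) $\mathcal{T}(\mathcal{B}_h) \subset \mathcal{B}_h$ and (ii) $\mathcal{T}$ is a contraction on $\mathcal{B}_h$ in $\|\cdot\|_Y$.

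\textbf{Non-degeneracy and self-mapping.} The first step is to check, uniformly in $h$ for $v_h \in \mathcal{B}_h$, all the hypotheses of Theorem~\ref{Thm:LinWestervelt} (equivalently, Proposition~\ref{Prop:theta_higher}), in particular the non-degeneracy $\alpha_h \geq \alpha_0 > 0$. Splitting $v_h = I_h u + (R_h u - I_h u) + (v_h - R_h u)$, the $L^\infty$ stability in \eqref{est_interpolant} gives $|I_h u|_{L^\infty} \leq C_{\textup{st}} m$, while the inverse estimate \eqref{inverse_estimate} with $p=2$, combined with Lemma~\ref{Lemma_projector_I} for $R_h u - I_h u$ and with the ball bound on $v_h - R_h u$, yields
\begin{equation*}
\|v_h\|_{L^\infty L^\infty} \leq C_{\textup{st}} m + C\, h^{s-d/2}(M + C_0).
\end{equation*}
Since $s > d/2$, the last term vanishes as $h \to 0$, and for $m$ and $h$ sufficiently small, $1 - 2k\|v_h\|_{L^\infty L^\infty} \geq 1 - 4km > 0$. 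The same splitting, together with the projection bound on $\|\nabla(v_h - R_h u)\|_{L^\infty L^2} \leq C_0 h^{s-1}$, controls $\|\alpha_{h,t}\|_{L^\infty L^3}$, $\|\beta_h\|_{L^\infty L^3}$ and $\|\beta_h\|_{L^2 L^\infty}$ by $C M$ plus $o(1)$, so the constant in \eqref{final_constant} stays bounded. Theorem~\ref{thm:existence_linear} then produces $u_h = \mathcal{T} v_h$, and Theorem~\ref{Thm:LinWestervelt}, applied with $\alpha - \alpha_h = -2k(u - v_h)$ and $\beta - \beta_h = -2k(u_t - v_{h,t})$, gives
\begin{equation*}
\|u - u_h\|_Y \leq C(m,M,T)\bigl\{ h^s M + M \|u - v_h\|_{L^\infty L^2} + M \|u_t - v_{h,t}\|_{L^\infty L^2}\bigr\}.
\end{equation*}
Bounding $\|u - v_h\|_{L^\infty L^2} \leq \|u - R_h u\|_{L^\infty L^2} + \|R_h u - v_h\|_Y \leq Ch^s(1 + C_0)$ via Lemma~\ref{Lemma_projector_I} and the ball constraint, and similarly for $u_t - v_{h,t}$, the right-hand side is $\leq C(m,M,T)(1 + MC_0)h^s$. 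Choosing $C_0$ large and then $M$, $h$ small, this is $\leq C_0 h^s / 2$, and a triangle inequality against $\|u - R_h u\|_Y \leq C h^s$ places $u_h \in \mathcal{B}_h$.

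\textbf{Contraction and obstacle.} For $v_h^{(1)}, v_h^{(2)} \in \mathcal{B}_h$ with images $u_h^{(i)} = \mathcal{T} v_h^{(i)}$, subtracting the two linear problems shows that $\delta = u_h^{(1)} - u_h^{(2)}$ solves a linear semi-discrete equation in the form of \eqref{weak_form_discrete} with zero initial data, coefficients $\alpha_h^{(1)}, \beta_h^{(1)}$, and source
\begin{equation*}
g = 2k\bigl(v_h^{(1)} - v_h^{(2)}\bigr) u_{h,tt}^{(2)} + 2k\bigl(v_{h,t}^{(1)} - v_{h,t}^{(2)}\bigr) u_{h,t}^{(2)}.
\end{equation*}
Re-running the argument of Propositions~\ref{Prop_lin_discrete}--\ref{Prop:theta_higher} with $\delta$ in the role of $\theta$ bounds $\|\delta\|_Y$ by $C(m,M,T) \|g\|_{L^2 L^2}$. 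Using H\"older's inequality in space together with the inverse estimate to control $\|u_{h,t}^{(2)}\|_{L^2 L^\infty}$ and $\|u_{h,tt}^{(2)}\|_{L^2 L^\infty}$ by $CM + o(1)$ (via the self-mapping step and the triangle inequality against $R_h u_t, R_h u_{tt}$), and using the $Y$-norm to control $\|v_h^{(1)} - v_h^{(2)}\|_{L^\infty L^2}$ and $\|v_{h,t}^{(1)} - v_{h,t}^{(2)}\|_{L^\infty L^2}$, one arrives at
\begin{equation*}
\|\delta\|_Y \leq C(m,M,T)\, k M\, \|v_h^{(1)} - v_h^{(2)}\|_Y,
\end{equation*}
which is a contraction for $M$ small enough. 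Banach's theorem then delivers a unique $u_h \in \mathcal{B}_h$, and the bound \eqref{a_priori_bound} follows from $u_h \in \mathcal{B}_h$ and the projection estimate $\|u - R_h u\|_Y \leq C h^s$. The principal obstacle is precisely the uniform-in-$h$ non-degeneracy of $\alpha_h$: because piecewise linear finite elements forbid the $H^2 \hookrightarrow L^\infty$ route used in the continuous theory, one is forced to combine the Scott--Zhang $L^\infty$ stability with an inverse estimate, and it is the residual $h^{s-d/2}$ factor this produces that drives the condition $s > \max\{1, d/2\}$ and the smallness requirements on $m$ and $M$.
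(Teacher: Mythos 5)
Your proposal is correct and follows essentially the same route as the paper: a Banach fixed-point argument on an $O(h^s)$-ball around the exact solution, with the linearization $\alpha_h=1-2kv_h$, $\beta_h=-2kv_{h,t}$, $f_h=0$, non-degeneracy obtained from the Scott--Zhang $L^\infty$-stability combined with the inverse estimate (which is exactly where $s>\max\{1,d/2\}$ enters), self-mapping via Theorem~\ref{Thm:LinWestervelt} applied with $\alpha-\alpha_h=-2k(u-v_h)$ and $\beta-\beta_h=-2k(u_t-v_{h,t})$, and contraction via the linear energy estimate for the difference equation with source $g$. The one step to repair is your bound $\|u^{(2)}_{h,tt}\|_{L^2L^\infty}\leq CM+o(1)$ via a triangle inequality against $R_h u_{tt}$, since $L^\infty$-stability of the Ritz projection for piecewise linears is not among the tools the paper establishes; either compare against $I_h u_{tt}$ (as you already do in the self-mapping step) or follow the paper, which stays in $L^4$ using the inverse estimate with the milder factor $h^{-d/4}$, pairs it through H\"older with $\|v^{(1)}_h-v^{(2)}_h\|_{L^\infty H_0^1}$ and $\|v^{(1)}_{h,t}-v^{(2)}_{h,t}\|_{L^2 H_0^1}$, and proves contractivity in the unweighted metric induced by $\max\{\|v_{tt}\|_{L^2L^2},\|v\|_{W^{1,\infty}H_0^1}\}$ rather than in your $h$-weighted $Y$-norm.
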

\begin{proof}
The main idea of the proof is to define an iterative map on which we will apply the Banach fixed-point theorem while relying on the results for the linearized problem. We first introduce the set
\begin{equation*}
\begin{aligned}
\mathcal{B}_h=\left \{ \vphantom{\left\|u_{tt}-v_{h, tt} \right\|_{L^
2 L^2}}v_h \in X_h\, :\right. & \begin{multlined}[t]\left.\left\|u_{tt}-v_{h, tt} \right\|_{L^
2 L^2}+\left\|u_t-v_{h, t} \right\|_{L^
\infty L^2}+\|u-v_h\|_{L^\infty L^2} \right.\\[1.5mm]
+h\|\nabla (u-v_h)\|_{L^\infty L^2}+ h\left \|\nabla (u_t-v_{h,t}) \right\|_{L^\infty L^2} \leq \, L h^{s}, \\[1mm]
\left.\left(v_{h}(0), v_{h, t}(0)\right)=\left(R_h u_0, R_h u_1 \right)\, \vphantom{\left\|u_{tt}-v_{h, tt} \right\|_{L^
2 L^2}}\right\}, \end{multlined}
\end{aligned}
\end{equation*}
the constant $L>0$ is independent of $h$ and $h \leq \overline{h}$. Note that the set $\mathcal{B}_h$ is non-empty since $R_h u \in \mathcal{B}_h$. For $v_h \in \mathcal{B}_h$, we then consider the following linearization of our semi-discrete problem
\begin{equation} \label{Westervelt_semi-discrete_lin}
\begin{aligned}
\begin{cases}
\hspace{1.6mm} \left( (1-2kv_h)u_{h, tt} , \phi \right) + c^2(\nabla u_h, \nabla \phi)+b\left(\nabla u_{h, t}, \nabla \phi \right)\\[1mm]
= 2k\left(v_{h,t} \, u_{h, t}, \phi \right),  \\[1mm]
\text{for every $\phi \in S_h$, pointwise a.e. in $(0,T)$},  \\[1mm]
\left(u_{h}(0), \tfrac{\partial u_h}{\partial t}(0)\right)=\left(R_h u_0, R_h u_1 \right).
\end{cases}
\end{aligned}
\end{equation} We introduce an iterative map $\mathcal{F}: v_h \mapsto u_h$, noting that $\mathcal{F}$ will be well-defined thanks to the uniqueness result from Theorem~\ref{thm:existence_linear}. Clearly a fixed point of $\mathcal{F}$ would solve  \eqref{Westervelt_semi-discrete}, so we proceed with verifying the conditions of the Banach fixed-point theorem. \\ [2mm]
\noindent \textbf{$(\mathcal{B}_h, d)$ is a complete metric space.}  $\mathcal{B}_h$ is closed in $\mathcal{X}=W^{1, \infty}(0,T; H_0^1(\Omega)) \cap H^2(0,T; L^2(\Omega))$ with respect to the metric $d$ induced by $$||| v |||_{\mathcal{X}}:=\max \left \{\|v_{tt}\|_{L^2L^2}, \|v\|_{W^{1, \infty} H_0^1} \right \},$$ 
from which completeness follows. \\[2mm]
 \noindent \textbf{$\mathcal{F}$ is a self-mapping.} Take any $v_h \in \mathcal{B}_h$. We want to show that $u_h=\mathcal{F} v_h \in \mathcal{B}_h$. We set $$\alpha_h(x,t)=1-2k v_h, \quad \displaystyle \beta_h=-2k v_{h,t},\quad f_h=0,$$ and check that the conditions of Theorem~\ref{thm:existence_linear} and Theorem~\ref{Thm:LinWestervelt} are satisfied. We first show that the non-degeneracy condition on $
 \alpha_h$ is fulfilled. Employing the identity $v_h=v_h-I_hu+I_hu$ and relying on the inverse estimate \eqref{inverse_estimate} yields
\begin{equation*}
    \begin{aligned}
        \|v_h\|_{L^\infty L^\infty} \leq& \,\|v_h-I_h u\|_{L^\infty L^\infty}+\|I_h u\|_{L^\infty L^\infty}\\
        \leq& \, h^{-d/2}C_{\textup{inv}}\|v_h-I_h u\|_{L^\infty L^2}+\|I_h u\|_{L^\infty L^\infty}.
    \end{aligned}
\end{equation*}
Then by additionally using the identity $v_h-I_hu=v_h-u+u-I_hu$ and the stability and approximation properties \eqref{est_interpolant} of the interpolant, we conclude that
\begin{equation} \label{Linf_bound1_b}
    \begin{aligned}
     \|v_h\|_{L^\infty L^\infty} 
        \leq& \, h^{-d/2} C_{\textup{inv}}\|v_h-u\|_{L^\infty L^2}+ h^{-d/2}C_{\textup{inv}}\|u-I_h u\|_{L^\infty L^2} \\
          &+C_{\textup{st}}\|u\|_{L^\infty L^\infty}\\[1mm]
        \leq&\, C_{\textup{inv}}L h^{s-d/2}+C_{\textup{inv}} C_{\textup{app}} h^{s-d/2} \|u\|_{L^\infty H^s} +C_{\textup{st}}\|u\|_{L^\infty L^\infty}\\[1mm]
         \leq&\,  C_{\textup{inv}}L \overline{h}^{s-d/2}+C_{\textup{inv}} C_{\textup{app}} \overline{h}^{s-d/2} M +C_{\textup{st}}m.
    \end{aligned}
\end{equation}
By choosing $\overline{h}$, $M$, and $m$ sufficiently small so that
$$m_0:= C_{\textup{inv}}L \overline{h}^{s-d/2}+C_{\textup{inv}} C_{\textup{app}} \overline{h}^{s-d/2} M +C_{\textup{st}}m < 1/(2k),$$
we can guarantee that $ \|v_h\|_{L^\infty L^\infty}\leq m_0 < 1/(2k)$. In this way, the non-degeneracy condition in Assumption~\ref{assumption_2} is fulfilled since
\begin{align*}
&    0< \alpha_0=1-2km_0 \leq \alpha_h (x,t) \leq \alpha_1=1+2k m_0 \quad \text{in } \ \Omega \times(0,T).
\end{align*}
We next want to bound $\|\beta_h\|_{L^2 L^\infty}$, $\|\beta_h\|_{L^\infty L^3}$, $\|\alpha_h\|_{L^\infty L^3}$, and $\|\alpha_{h,t}\|_{L^\infty L^3}$ uniformly with respect to $h$. Similarly to \eqref{Linf_bound1_b}, we derive the following estimate
\begin{equation} \label{Linf_bound2}
\begin{aligned}
     \left\|v_{h,t}\right\|_{L^2 L^\infty} 
     \leq &\, \left\|v_{h,t}-I_h u_t\right\|_{L^2 L^\infty} +\left\|I_{h} u_t\right\|_{L^2 L^\infty} \\[1mm]
       \leq& \, C_{\textup{inv}}h^{s-d/2}\left\|v_{h,t}-u_t+u_t-I_h u_t\right\|_{L^2 L^2}+\left\|I_{h} u_t\right\|_{L^2 L^\infty}  \\[1mm]
     \leq&\, \sqrt{T} C_{\textup{inv}}L h^{s-d/2}+C_{\textup{inv}} C_{\textup{app}} h^{s-d/2} \left\|u_t \right\|_{L^2 H^s}
     +C_{\textup{st}}\left\|u_t\right\|_{L^2 L^\infty},
\end{aligned}    
\end{equation}
from which we have $ \left\|\beta_{h}\right\|_{L^2 L^\infty} \leq 2k (\sqrt{T} C_{\textup{inv}}L \overline{h}^{s-d/2}+C_{\textup{inv}} C_{\textup{app}} \overline{h}^{s-d/2} M+M) $.
Furthermore, it holds that
\begin{align*}
 & \left\|\beta_{h}\right\|_{L^\infty L^3}=\left\|2k v_{h,t}\right\|_{L^\infty L^3} \leq 2k C_{H_0^1, L^3}(L\overline{h}^{s-1}+C M), \\[0.5mm]
 &  \left\|\alpha_{h}\right\|_{L^\infty L^3}=\left\|1-2kv_h \right\|_{L^\infty L^3} \leq C(\Omega)+ 2kC_{H_0^1, L^3}(L\overline{h}^{s-1}+C M),
\end{align*}
and $$\left\|\alpha_{h, t}\right\|_{L^\infty L^3}=\left\|\beta_{h}\right\|_{L^\infty L^3} \leq 2k C_{H_0^1, L^3}(L\overline{h}^{s-1}+C M).$$ According to Theorem~\ref{thm:existence_linear}, there exists a unique solution $u_h \in X_h$ of \eqref{Westervelt_semi-discrete_lin}. Thanks to the a priori bound for the linearized Westervelt equation stated in Theorem~\ref{Thm:LinWestervelt}, we have 
\begin{equation} \label{est_fixedpoint_}
\begin{aligned}
& \|u-u_h\|_{L^\infty L^2}+\left\|u_t-u_{h,t} \right\|_{L^
\infty L^2}+\left\|u_{tt}-u_{h,tt} \right\|_{L^
2 L^2}\\[1mm]
&+h\|\nabla (u-u_h)\|_{L^\infty L^2}+ h\left \|\nabla u_{t}-u_{h,t} \right\|_{L^\infty L^2}  \\[1mm]
\leq&\, C_{*} \begin{multlined}[t] \left\{h^{s} \|u\|_{L^\infty H^s}+h^{s} \|u_t\|_{L^\infty H^s}+h^{s} \|u_{tt}\|_{L^2 H^s} \right.  \\[1mm]
\left.+ k \left\|u_{tt}\right\|_{L^2 L^\infty}\|u-v_h\|_{L^\infty L^2}+ k\left\|u_t\right\|_{L^2 L^\infty}\left\|u_t-v_{h,t} \right\|_{L^\infty L^2} \right \}, \end{multlined}
\end{aligned}
\end{equation}
where the constant appearing above is computed according to \eqref{final_constant} and the derived uniform bounds on $\alpha_h$ and $\beta_h$:
\begin{equation*}
\begin{aligned}
   C_{*}=& \, C_{13}\begin{multlined}[t] \left\{((1+\sqrt{T})L\overline{h}^{s-d/2}+M(1+\overline{h}^{s-d/2})+L\overline{h}^{s-1}+m+1) \right.\\[0.5mm]
  \left.  \times \, \text{exp}(\, C_{14}\,(L^2\overline{h}^{2(s-1)}+M^2+T+1)T) +1\right \}. \end{multlined}
\end{aligned}
\end{equation*}
Therefore, from \eqref{est_fixedpoint_} and the fact that $v_h \in \mathcal{B}_h$ we infer that
\begin{equation*}
\begin{aligned}
& \|u-u_h\|_{L^\infty L^2}+\left\|u_t-u_{h,t} \right\|_{L^
\infty L^2}+\left\|u_{tt}-u_{h,tt} \right\|_{L^
2 L^2}\\[1mm]
&+h\|\nabla (u-u_h)\|_{L^\infty L^2}+ h\left \|\nabla u_{t}-u_{h,t} \right\|_{L^\infty L^2}  \\[1mm]
\leq& \, C_{*}\{3+2k L\}M h^s \leq \, L h^s,
\end{aligned}
\end{equation*}
for sufficiently small $m$, $M$, and $\overline{h}$. We can conclude that $u_h \in \mathcal{B}_h$ and, therefore, $\mathcal{F}(\mathcal{B}_h) \subset \mathcal{B}_h$.\\[2mm]
\noindent \textbf{$\mathcal{F}$ is a contraction.}  Let $v_h^{(1)}, v_h^{(2)} \in \mathcal{B}_h$ and $u^{(1)}_h=\mathcal{F} v^1_{h}$, $u^{(2)}_{h}=\mathcal{F}v^{(2)}_{h}$. We want to show that
$$\|\mathcal{F}v^{(1)}_h-\mathcal{F}v^{(1)}_h \|_{\mathcal{X}}\, \leq \, q \|v_h^{(1)}-v^{(2)}_h \|_{\mathcal{X}}, \quad 0<q<1. $$
We note that the difference $\psi_h=u^{(1)}_h-u^{(2)}_h$ satisfies
\begin{equation*}
\begin{aligned}
&\displaystyle ((1-2k v^{(1)}_{h})\psi_{h, tt}, \phi)+ c^2 (\nabla \psi_h, \nabla \phi)+b (\nabla \psi_{h, t}, \nabla \phi)-2k \left (v^{(2)}_{h,t} \psi_{h, t}, \phi \right)\\
=& \,  \left ( 2ku^{(1)}_{h,t} \left(v^{(1)}_{h,t}-v^{(2)}_{h,t}\right) +2ku^{(2)}_{h,tt} \left(v_h^{(1)}-v_h^{(2)}\right), \phi \right),
\end{aligned}
\end{equation*}
with zero initial conditions, for all $\phi \in S_h$. The equation can be seen as a special case of the PDE we considered in Theorem~\ref{thm:existence_linear} if we choose
\begin{equation*}
\begin{aligned}
& \alpha_h=1-2k v^{(1)}_{h}, \quad \beta_h=-2k \, v^{(2)}_{h,t}, \\
& f_h= 2ku^{(1)}_{h,t} \left(v^{(1)}_{h,t}-v^{(2)}_{h,t}\right) +2ku^{(2)}_{h,tt} \left(v_h^{(1)}-v_h^{(2)}\right),
\end{aligned}
\end{equation*}
and zero initial conditions. Owing to Theorem~\ref{thm:existence_linear}, we then have the a priori bound
\begin{equation} \label{contractivity}
\begin{aligned}
    & \left\|\psi_{h, tt}\right\|^2_{L^2L^2}+ \|\nabla \psi_h\|^2_{L^\infty L^2}+\left\|\nabla \psi_{h, t} \right\|^2_{ L^\infty L^2} +\left\|\nabla \psi_{h, t}\right\|^2_{L^2L^2}  \\
          \leq& \, C_{*} k^2 \|u^{(1)}_{h,t} (v^{(1)}_{h,t}-v^{(2)}_{h,t})+ u^{(2)}_{h,tt} (v_h^{(1)}-v_h^{(2)})\|^2_{L^2 L^ 2} \smallskip \\
          \leq& \, C_{*} k^2  C^2_{H_0^1, L^4}\begin{multlined}[t] \left(\|u^{(1)}_{h,t}\|^2_{L^\infty L^4}\|v^{(1)}_{h,t}-v^{(2)}_{h,t}\|^2_{L^2 H_0^1} \right.\\
    \left. +\| u^{(2)}_{h,tt}\|^2_{L^2 L^4}\| v_h^{(1)}-v_h^{(2)}\|^2_{L^\infty H_0^1}\right), \end{multlined}
\end{aligned}
\end{equation}
where the constant above is computed according to \eqref{Thm1_const},
$$C_{*}=C_{15}\, \textup{exp}( \, C_{16}\, (L^2\overline{h}^{2(s-1)}+M^2+1)T).$$
We next show that $\|u^{(1)}_{h,t}\|_{L^\infty L^4}$ and $\| u^{(2)}_{h,tt}\|_{L^2 L^4}$ in \eqref{contractivity} can be made small so that $\mathcal{F}$ is a contraction. On account of the inverse properties \eqref{inverse_estimate} of $\{S_h\}_{0<h\leq \overline{h}}$ , we first find that
\begin{equation*}
\begin{aligned}
\| u^{(2)}_{h,tt}\|_{L^2 L^4}\leq&\, \|u_{h, tt}^{(2)}-R_h u_{tt}\|_{L^2 L^4}+\| R_h u_{tt}\|_{L^2 L^4}\\
\leq&\, h^{-d/4} C_{\textup{inv}} \|u_{h, tt}^{(2)}-R_h u_{tt} \|_{L^2 L^2}+\| R_h u_{tt} \|_{L^2 L^4}\\
\leq&\, h^{-d/4} C_{\textup{inv}} \|u_{h, tt}^{(2)}-u_{tt} \|_{L^2 L^2}+h^{-d/4} C_{\textup{inv}} \|u_{tt}-R_h u_{tt} \|_{L^2 L^2}+\| R_h u_{tt} \|_{L^2 L^4}.
\end{aligned}
\end{equation*}
We can then bound $\varrho_{tt}=u_{tt}-R_h u_{tt}$ by employing Lemma~\ref{Lemma_projector_I} to get
\begin{equation} \label{contractivity_est_1}
\begin{aligned}
\| u^{(2)}_{h,tt}\|_{L^2 L^4}
\leq&\, C_{\textup{inv}} h^{-d/4} (Lh^s+C h^s\| u_{tt}\|_{L^2 H^s})+\left\| R_h u_{tt} \right\|_{L^2 L^4} \\
\leq&\,  C_{\textup{inv}}(L+CM) \overline{h}^{s-d/4}+ CM.
\end{aligned}
\end{equation}
Above, we have also made use of the fact that $\|R_h u_{tt}\|_{L^2L^4} \leq C \|u_{tt}\|_{L^2 H^s}$ for some $C>0$ independent of $h$ and $u$. Furthermore, since $u_h^{(1)} \in \mathcal{B}_h$, we can bound the term $ \|u^{(1)}_{h,t}\|_{L^\infty L^4}$ as follows
\begin{equation} \label{contractivity_est_2}
\begin{aligned}
     \|u^{(1)}_{h,t}  \|_{L^\infty L^4} \leq&\, C_{H_0^1, L^4} \left(\|u_t\|_{L^\infty H^1}+\|u_t-u^{(1)}_{h,t}\|_{L^\infty H_0^1}\right) \\
     \leq&\, C (M+L\overline{h}^{s-1}),
\end{aligned}
\end{equation}
where $C>0$ is independent of $h$ and $u$. Altogether from \eqref{contractivity}, \eqref{contractivity_est_1}, and \eqref{contractivity_est_2}, for sufficiently small $M$ and $\overline{h}$, we can conclude that $\mathcal{F}$ is contractive with respect to the topology induced by $|||\cdot|||$. The statement now follows by applying the Banach fixed-point theorem.
\end{proof}
We note that due to the presence of the strong damping in the model, the assumed regularity of the solution $u$ is to be realistically expected. The higher the sound diffusivity $b$ is, the less pronounced nonlinear effects are in the propagation, such as the steepening of the wave. We refer the interested reader to, e.g.,~\cite[Section 7.1.1]{fritz2018well} for a detailed discussion on how the $b$-damping influences the behavior of the nonlinear acoustic models.
\begin{remark} If $\Omega$ is a bounded interval in $\mathbb{R}$, we can rely on the embedding $H_0^1(\Omega) \hookrightarrow L^\infty(\Omega)$ to avoid degeneracy of the semi-discrete Westervelt equation and then we do not need to employ the inverse properties of $\{S_h\}_{0<h \leq \overline{h}}$. Indeed, the $L^\infty$ bounds \eqref{Linf_bound1_b} and \eqref{Linf_bound2} in the proof of Theorem~\ref{thm:Westervelt} can be replaced by
\begin{equation*}
\begin{aligned}
&\|v_h\|_{L^\infty L^\infty} \leq C_{H_0^1, L^\infty}(L \overline{h}^{s-1}+CM), \quad
\left \|v_{h,t} \right \|_{L^2 L^\infty} \leq C_{H_0^1, L^\infty}(L \overline{h}^{s-1}+CM), 
\end{aligned}
\end{equation*}
respectively, for $v_h \in \mathcal{B}_h$. Similarly, when proving contractivity, we can replace estimate  \eqref{contractivity} by 
\begin{equation*} 
\begin{aligned}
    & \left\|\psi_{h, tt}\right\|^2_{L^2L^2}+ \|\nabla \psi_h\|^2_{L^\infty L^2}+\left\|\nabla \psi_{h, t} \right\|^2_{ L^\infty L^2} +\|\nabla \psi_{h, t}\|^2_{L^2L^2}  \smallskip \\
          \leq& \,  C_{*} k^2  C^2_{H_0^1, L^\infty} \begin{multlined}[t]\left(\|u^{(1)}_{h,t}\|^2_{L^\infty H_0^1}\, \|v^{(1)}_{h,t}-v^{(2)}_{h,t}\|^2_{L^2 L^2} \right.\\
     \left.    +\| u^{(2)}_{h,tt}\|^2_{L^2 L^2}\, \| v_h^{(1)}-v_h^{(2)}\|^2_{L^\infty H_0^1}\right).\end{multlined}
\end{aligned}
\end{equation*}
Since $u^{(1)}_{h}$, $u^{(2)}_{h} \in \mathcal{B}_h$, we can easily show that $\mathcal{F}$ is a contraction for sufficiently small $M$ and $\overline{h}$. Therefore, the a priori bound \eqref{a_priori_bound} holds in $1$D as well with $1 < s \leq 2$. 
\end{remark}
\hspace*{1em} Although beyond the scope of the present work, we expect that the error analysis of a fully discrete scheme would rely on similar theoretical tools and an analogous fixed-point approach. We refer to~\cite[Chapter 8]{raviart1998introduction} which may serve as a first step in the direction of the error analysis for the Newmark time-stepping scheme that is often used in practice.
\section{Numerical results} \label{Section:NumExample}
To illustrate the theory, we conduct two numerical experiments using a MATLAB implementation based on the GeoPDEs package~\cite{vazquez2016new}. 
\subsection{Propagation in a channel} We first perform an experiment in a 1D channel setting. For the medium, we choose water with the parameter values
        $$c=1500 \, \textup{m}/\textup{s}, \ \rho = 1000 \, \textup{kg}/\textup{m}^3,\ b=6 \cdot 10^{-9} \, \textup{m}^2/\textup{s}, \ \beta_a=3.5;$$ cf.~\cite{manfred}.  Following~\cite[Algorithm 1]{muhr2017isogeometric}, to resolve the nonlinearities, we employ a fixed-point iteration with respect to the second time derivative. The tolerance is set to $\textup{TOL}=10^{-8}$. Time stepping is performed by employing the Newmark method~\cite{newmark1959method} with the parameters $(\beta, \gamma)=(0.45, 0.75)$. The values are chosen in this way since they were shown to provide good results in simulations of the nonlinear acoustic equations; see~\cite{fritz2018well, muhr2017isogeometric}. For all spatial refinements, we have $2001$ grid points in time with the final time set to $T=37\, \mu \textup{s}$. We conduct this experiment with the initial data 
\begin{equation*} 
\begin{aligned}
    (u_0, u_1)= \left(A_1 \, \textup{exp}\left (-\frac{(x-\mu)^2}{2\sigma_1^2}\right),\ A_2 (x-\mu) \, \textup{exp}\left (-\frac{(x-\mu)^2}{2\sigma_2^2}\right) \right), \\
\end{aligned}
\end{equation*}
where $A_1=1.2 \cdot 10^8 \, \textup{Pa}$, $A_2=-10^{11} \, \textup{Pa}$, $\sigma_1=0.015$, $\sigma_2=0.02$, and $\mu=0.1$. We note that in this setting the upper bound for the acoustic pressure that guarantees non-degeneracy amounts to $1/(2k) \approx 214 \,$MPa. \\
 \indent We use piecewise linear elements in space to compute solutions on different discretization levels. The numerical solution is computed on level $N$, $N \in [1, 6]$, by employing $100 \cdot 2^{N-1}$  elements for the channel length of $0.2 \, \textup{m}$. The reference solution is taken to be the solution on a very fine mesh, where $N=8$. After obtaining the numerical solution on a coarse mesh, we interpolate it linearly to the mesh on level $N=8$ and compare to the reference. Figure~\ref{fig:Snapshots} displays snapshots of the reference pressure wave as it propagates. \\
 \indent Let $e_N$ denote the error in a certain norm on level $N$. We determine the order of convergence on this level via
\begin{align*}
\textup{order}_{N}=\frac{log(e_{N-1}/e_{N})}{log(2)}.
\end{align*}
The numerical error orders obtained in the experiments are stated in Table~\ref{table1} and Table~\ref{table2}. We see that they agree with our theoretically predicted bounds in Theorem~\ref{thm:Westervelt}. \newpage
\begin{figure}[h!]
\begin{center}
\input{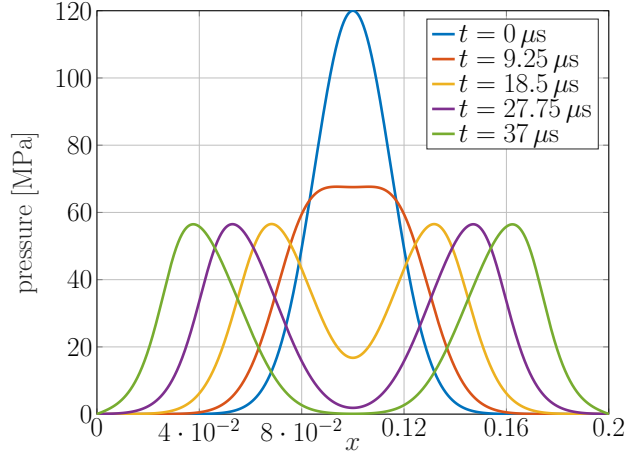}
\vspace{-1mm}
\caption{Snapshots of the reference pressure wave in a channel
\label{fig:Snapshots}} 
\end{center}
\end{figure}      
\begin{center}
 \begin{tabular}{||c |c  | c ||} 
 \hline
 level $N$ &  $\|u-u_h\|_{L^\infty L^2}$ &  $\|\nabla (u-u_h)\|_{L^\infty L^2}$  \\ [0.5ex] 
 \hline\hline
  \hline
  2 & 1.9997 & 0.9993   \\  
 \hline
 3 & 2.0011  & 1.0003   \\
 \hline
 4 & 2.0042   & 1.0021  \\
 \hline
 5 &    2.0168   & 1.0085  \\
 \hline
 6 & 2.0692 &  1.0352  \\  
 \hline 
\end{tabular}
\vspace*{-1mm}
\captionof{table}{Order of discretization errors for $u_h$.} \label{table1}
\end{center}
~\\                                          
\begin{center}
 \begin{tabular}{||c |c  | c | c ||} 
 \hline
 level $N$ &  $\left \|u_t-u_{h,t} \right\|_{L^\infty L^2}$ &  $\left \|\nabla (u_t-u_{h,t}) \right \|_{L^\infty L^2}$ &  $\left \|u_{tt}-u_{h, tt} \right \|_{L^2 L^2}$ \\ [0.5ex] 
 \hline\hline
  \hline
  2 &  2.0068 & 1.2258    & 2.0172     \\
 \hline
 3 &  2.0039   & 1.0691  & 2.0076  \\
 \hline
 4 & 2.0050 & 1.0201 & 2.0059   \\
 \hline
 5 & 2.0171 & 1.0131  & 2.0173  \\
 \hline
 6 & 2.0697 & 1.0363  & 2.0694  \\  
 \hline 
\end{tabular}
\end{center}
\vspace*{-1mm}
\captionof{table}{Order of discretization errors for $u_{h, t}$ and $u_{h, tt}$.} \label{table2}
\setlength{\parindent}{1em}
\subsection{Focused ultrasound} In our second example, we consider a more application-oriented setting of a focused-ultrasound problem. In ultrasound applications, the sound is often excited by transducers arranged on a spherical surface~\cite{cline1999ultrasound, manfred}. The wave then self-focuses as it propagates; see~Figure~\ref{fig:self-focusing}. This approach results in localized high-pressure values, and is therefore often used in non-invasive treatments of kidney stones and certain types of cancer~\cite{hill1995high, illing2005safety, kennedy2003high}.\\
\indent In the present experiment, our computational domain is a rectangle $[0, 0.04]\, \textup{m} \times [0, 0.05]\, \textup{m}$ with a curved bottom side that belongs to the circle centered at $(0.02 \, \textup{m}, 0.04 \, \textup{m})$ with radius $R^2=0.002\, \textup{m}^2$. On this bottom side we impose Neumann boundary conditions as a modulated sinusoidal wave: 
\begin{align*}
\frac{\partial u}{\partial n}= \begin{cases} g_0\, \sin(\omega t)\,(1+ \sin(wt/4)) , \ t> 2 \pi/w, \smallskip \\ g_0\, \sin(\omega t), \hspace{25mm}  t \leq 2\pi/w. \end{cases} \quad \text{on } \Gamma_{\textup{N}}.
\end{align*}
The angular frequency is taken to be $w=2 \pi f$ with $f= 60 \, \textup{kHz}$, and we set $g_0= 10^{7} \, \textup{Pa/m}$. On the rest of the domain sides, we impose linear absorbing boundary conditions, i.e. $$\frac{\partial u}{\partial n}=-\frac{1}{c}u_t \quad \text{on } \partial \Omega \setminus \Gamma_{\textup{N}}.$$
\begin{figure}[H]
\begin{center}
\begin{minipage}{0.32\textwidth}
	\includegraphics[trim = 13.5cm 0cm 0cm 2.5cm, clip, scale=0.162]{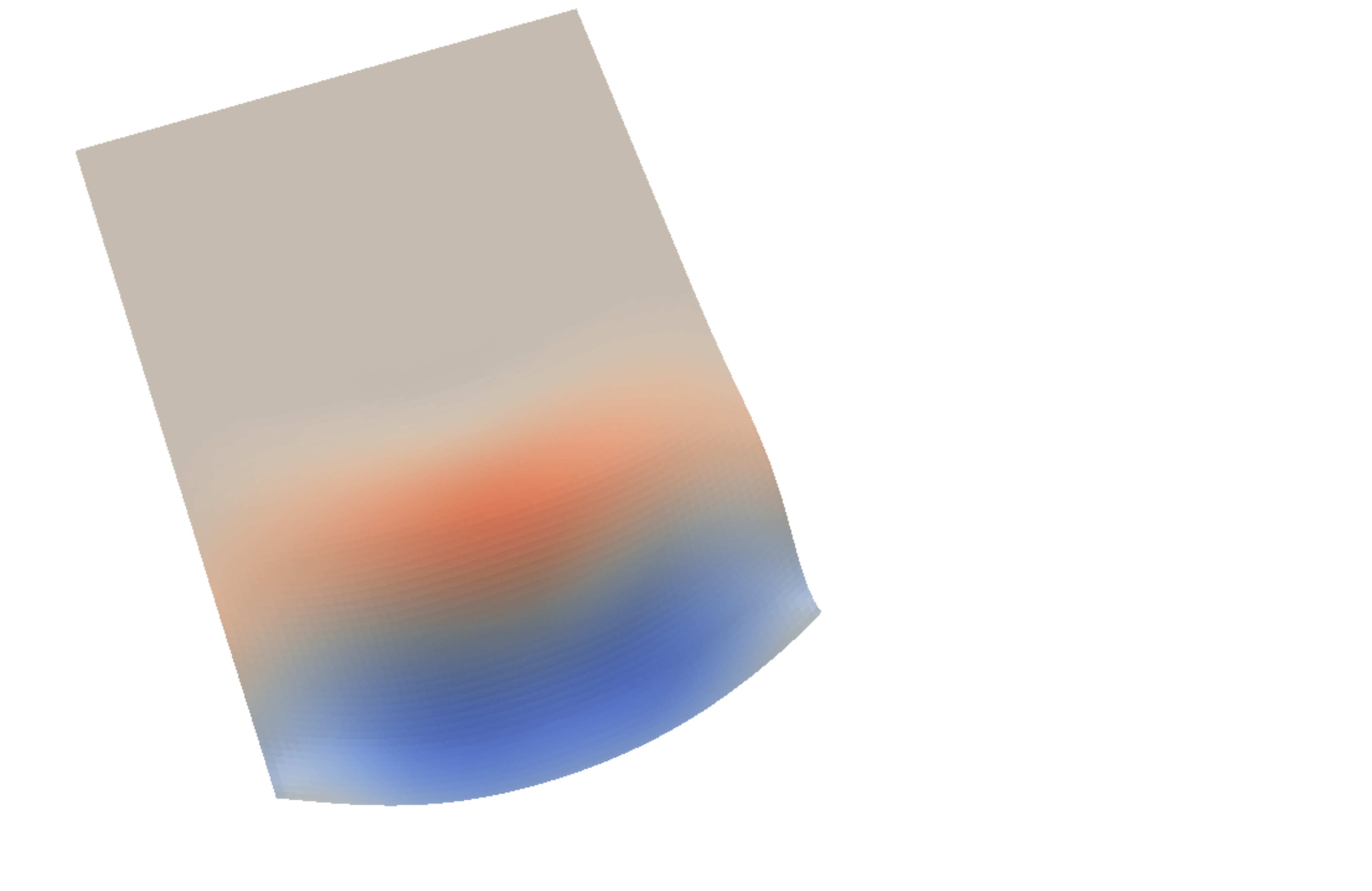}
\end{minipage}\hfill
\begin{minipage}{0.32\textwidth}
	\includegraphics[trim =13cm 0cm 0cm 2.5cm, clip, scale=0.162]{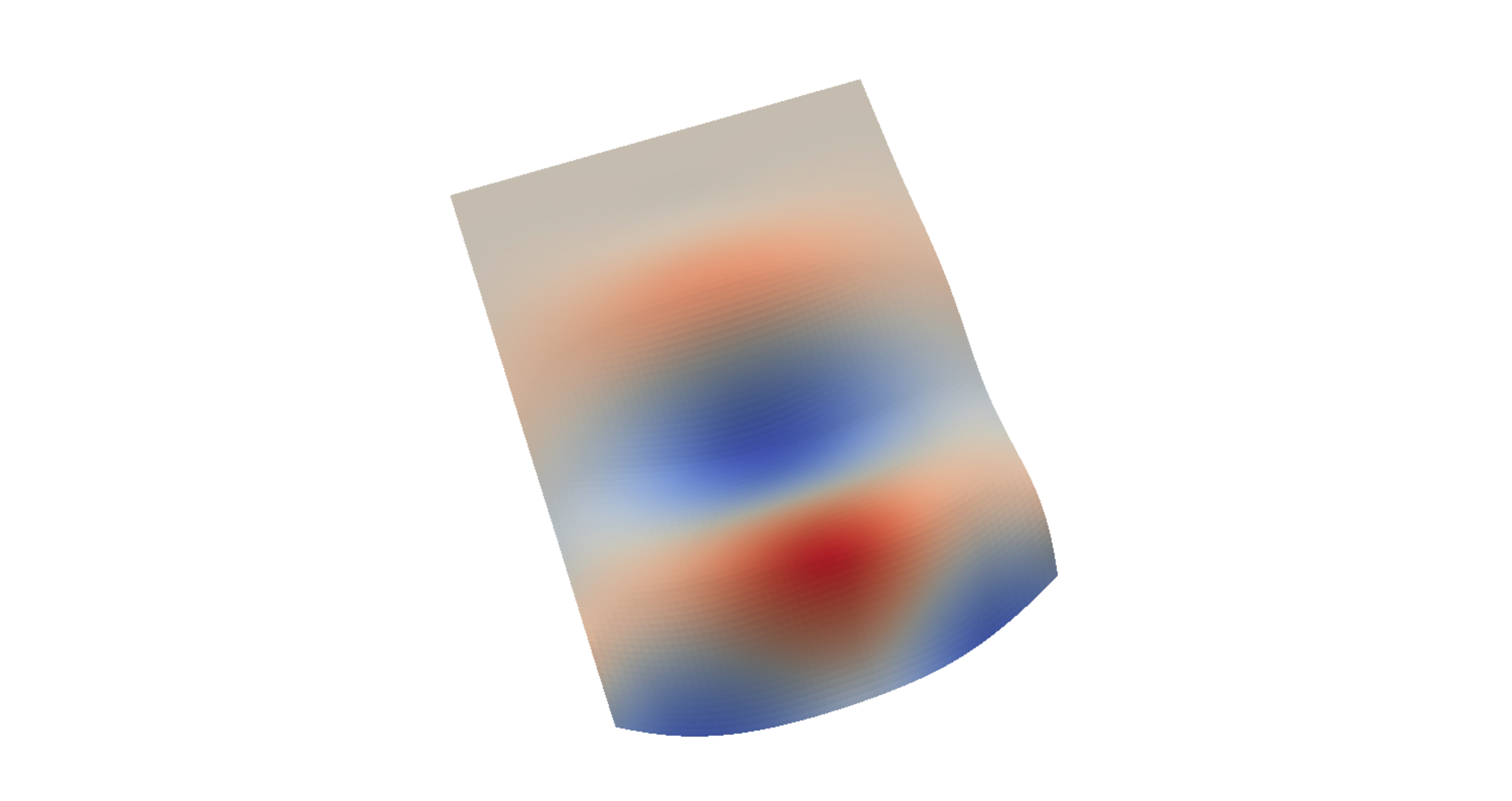}
\end{minipage}\hfill
\begin{minipage}{0.32\textwidth}
	\includegraphics[trim = 13cm 0cm 4cm 2.5cm, clip, scale=0.162]{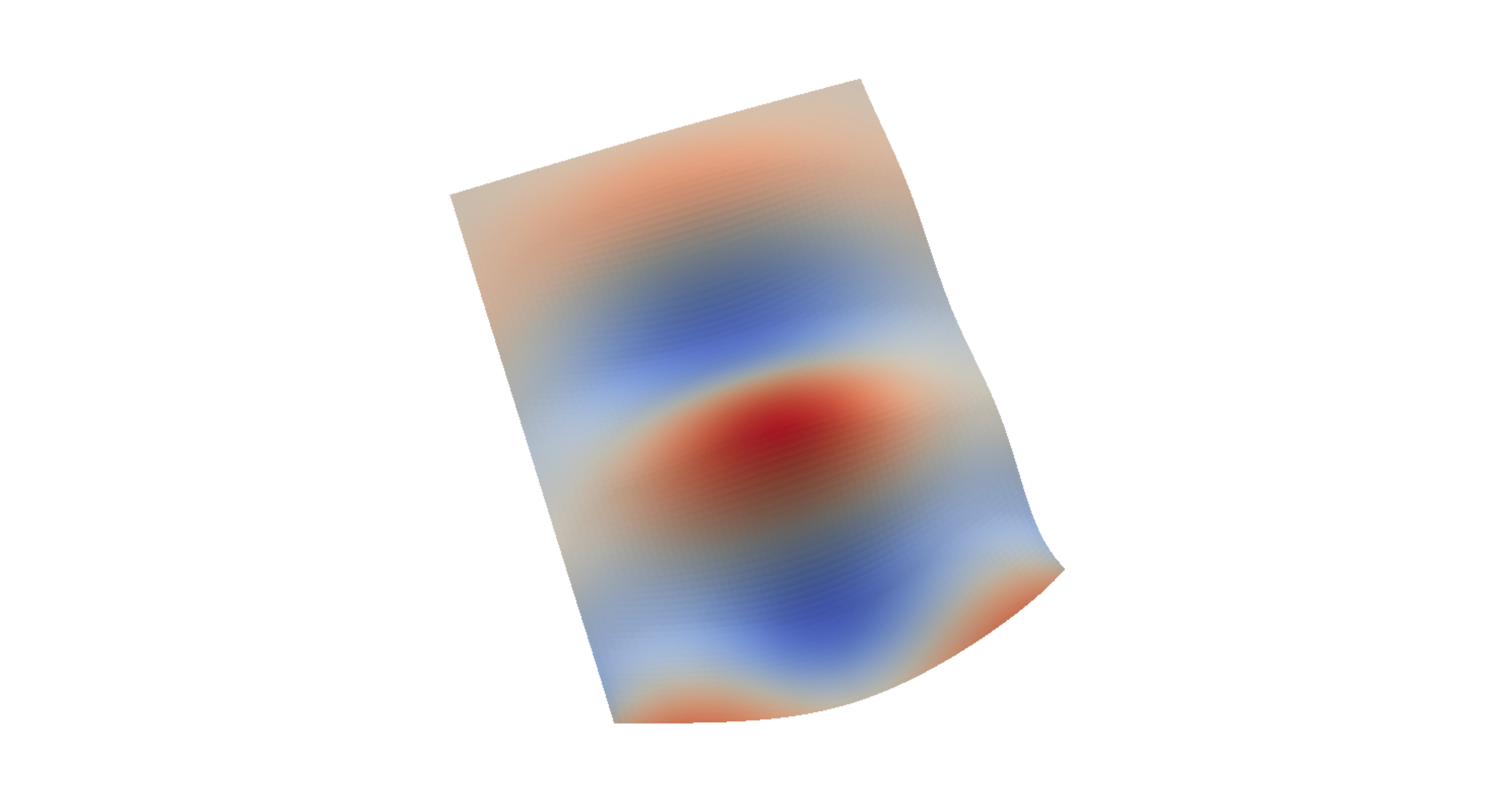}
\end{minipage}
\caption{Propagation and self-focusing of a sound wave \label{fig:self-focusing}}
\end{center}
\end{figure}
We mention that nonlinear absorbing conditions for the Westervelt equation have also been derived and investigated in~\cite{muhr2018self, shevchenko2015absorbing}.  Discretization in time is performed with $3500$ time steps for the final time $T=40 \, \mu \textup{s}$ and we again employ the Newmark scheme for time stepping with the same parameters as before. For the discretization is space, we employ a quadrilateral mesh which on the discretization level $N$ has $2^{N-1}\cdot 35$ elements in the propagation direction and $ 2^{N-1} \cdot 20$ elements in the other direction, where $N \in \{1, \ldots, 5\}$. \\
\indent On each discretization level $N$ we compute
\begin{align*}
q(u_h)=\|u^{N}_h\|_{L^\infty L^2},
\end{align*}
noting that $e_N= |q(u)-q(u^{N}_h)| \leq\, \|u-u^{N}_h\|_{L^\infty L^2}.$ Thus we have
\begin{align*}
    q(u_h) \leq \|u\|_{L^\infty L^2}+Ch^s.
\end{align*} Figure~\ref{fig:QoI} displays how the function $q$ changes with respect to $h$. The data has been then fitted to a curve $\alpha +\beta h^{\gamma}$ by employing the nonlinear least-squares solver \emph{lsqcurvefit} in MATLAB with the starting point $(1, 1 ,2)$. We obtain $\gamma \approx 1.82$ for the order of convergence. In Table~\ref{table3}, we have the orders of discretization errors for $|q(u)-q(u_h)|$ if we take the value on the highest level $N=5$ as the reference, i.e., $u=u_h^{5}$, which are again around $2$.\\[6mm] 
\begin{minipage}{0.96\textwidth}
  \begin{minipage}[b]{0.48\textwidth}
%
\begin{tikzpicture}[scale=0.42, font=\huge]

\begin{axis}[%
width=5.358in,
height=4.226in,
at={(0.899in,0.57in)},
scale only axis,
xmin=0,
xmax=0.003, xtick={0. 0002, 0.0006, 0.001, 0.0014, 0.0018, 0.0022, 0.0026, 0.003},
xlabel style={at={(0.5, -0.05)}, font=\color{white!15!black}},
xlabel={\fontsize{25}{20} $h$},
ymin=1176,
ymax=1180.5, ytick={1176.5, 1177.5, 1178.5, 1179.5, 1180.5},
ylabel style={at={(-0.13, 0.5)}, font=\color{white!15!black}},
ylabel={\fontsize{25}{20} $q(u_h) \ \, [\textup{Pa}]$},
axis background/.style={fill=white},
title style={font=\bfseries},
xmajorgrids,
ymajorgrids,
legend style={at={(0.072,0.768)}, anchor=south west, legend cell align=left, align=left, draw=white!15!black}
]
\addplot [color=black, draw=none, mark=o, mark options={solid, black, scale=3}]
  table[row sep=crcr]{%
0.00250000000005457	1180.47875053766\\
0.00125000000002728	1177.48279070331\\
0.000624999999899956	1176.60715459484\\
0.000312500000063665	1176.37817570144\\
0.000156249999918145	1176.32006256018\\
};
\addlegendentry{Data}

\addplot [color=blue, line width=3.2pt]
  table[row sep=crcr]{%
0.00250000000005457	1180.47981402405\\
0.00242897727275704	1180.26579494478\\
0.00235795454545951	1180.0568244548\\
0.00228693181816197	1179.85293014747\\
0.00223958333322116	1179.71983478865\\
0.00219223484850772	1179.58901693225\\
0.00214488636356691	1179.46048553386\\
0.00209753787885347	1179.33424978104\\
0.00205018939391266	1179.21031910463\\
0.00200284090919922	1179.08870319095\\
0.00195549242425841	1178.96941199484\\
0.0019081439393176	1178.85245575371\\
0.00186079545460416	1178.73784500278\\
0.00181344696966335	1178.62559059135\\
0.00176609848494991	1178.51570370063\\
0.00171875000000909	1178.40819586291\\
0.00167140151506828	1178.30307898247\\
0.00162405303035484	1178.20036535837\\
0.00157670454541403	1178.10006770924\\
0.00152935606070059	1178.00219920055\\
0.00148200757575978	1177.90677347441\\
0.00143465909081897	1177.81380468242\\
0.00138731060610553	1177.72330752197\\
0.00133996212116472	1177.63529727635\\
0.00129261363645128	1177.54978985937\\
0.00124526515151047	1177.46680186507\\
0.00119791666656965	1177.38635062346\\
0.00115056818185622	1177.30845426302\\
0.0011032196969154	1177.2331317814\\
0.00105587121220196	1177.16040312555\\
0.00100852272726115	1177.09028928317\\
0.00096117424232034	1177.02281238751\\
0.000913825757606901	1176.9579958384\\
0.000866477272666089	1176.89586444284\\
0.00081912878795265	1176.83644457956\\
0.000771780303011838	1176.77976439328\\
0.000748106060655118	1176.75246101317\\
0.000724431818071025	1176.72585402614\\
0.000700757575714306	1176.69994755766\\
0.000677083333357587	1176.67474589606\\
0.000653409091000867	1176.65025350474\\
0.000629734848416774	1176.62647503583\\
0.000606060606060055	1176.60341534551\\
0.000582386363703336	1176.58107951111\\
0.000558712121119243	1176.55947285046\\
0.000535037878762523	1176.53860094381\\
0.000511363636405804	1176.51846965876\\
0.000487689394049085	1176.49908517878\\
0.000464015151464992	1176.48045403616\\
0.000440340909108272	1176.46258315007\\
0.000416666666751553	1176.44547987116\\
0.00039299242416746	1176.42915203396\\
0.00036931818181074	1176.41360801915\\
0.000345643939454021	1176.39885682829\\
0.000321969696869928	1176.38490817443\\
0.000298295454513209	1176.3717725934\\
0.000274621212156489	1176.35946158249\\
0.00025094696979977	1176.34798777618\\
0.000227272727215677	1176.33736517294\\
0.000203598484858958	1176.32760943492\\
0.000179924242502238	1176.31873829458\\
0.000156249999918145	1176.31077212523\\
};
\addlegendentry{Fitted curve}

\end{axis}
\end{tikzpicture}%
\captionsetup{justification=raggedleft,singlelinecheck=false}

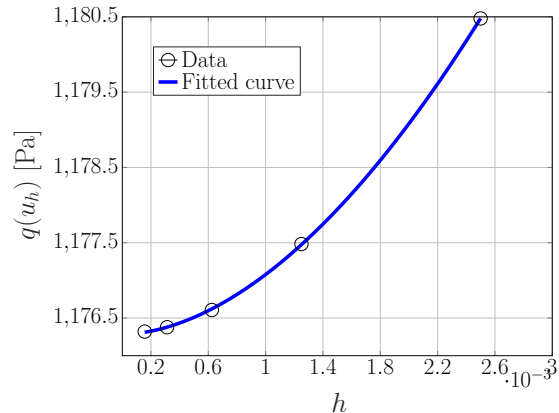
\captionof{figure}{$q(u_h)$ for different $h$
\label{fig:QoI}}
  \end{minipage}
 \hspace*{9mm}
  \begin{minipage}[b]{0.48\textwidth}
\begin{center}
 \begin{tabular}{||c |c ||} 
 \hline
 level $N$ & $\, |q(u)-q(u^{N}_h)| \,$   \\ [0.5ex] 
 \hline\hline
  \hline
  2 & 1.8386   \\  
 \hline
 3 &   2.0179    \\
 \hline
 4 & 2.3046     \\
 \hline
\end{tabular}
\end{center}
\vspace{7.5mm}
\captionof{table}{Order of error $e_{N}$.} \label{table3}
    \end{minipage}
  \end{minipage}
  
\section*{Acknowledgments}
We would like to thank Markus Muhr for helpful discussions about the numerical simulation of the problem. The funds provided by the Deutsche Forschungsgemeinschaft under the grant number WO 671/11-1 are gratefully acknowledged.

\bibliographystyle{siamplain}
\bibliography{references}
\end{document}